\documentclass[11pt,reqno]{amsart}
\usepackage[T1]{fontenc}
\usepackage{lmodern}
\usepackage{amsmath,amssymb,amsthm,mathtools}
\usepackage[margin=1in]{geometry}
\usepackage[expansion=false]{microtype}
\usepackage{enumitem}
\usepackage{needspace}
\usepackage{tikz}
\usetikzlibrary{arrows,positioning,calc}
\tikzset{property/.style={draw,rounded corners=2pt,align=center,
  inner sep=5pt,font=\small},imp/.style={->,>=latex,semithick},
  equiv/.style={<->,>=latex,semithick}}
\newcommand{\step}[1]{\par\smallskip\noindent\emph{#1}\enspace}
\usepackage[colorlinks=true,linkcolor=black,citecolor=black,urlcolor=black]{hyperref}
\hypersetup{pdfauthor={Hwankoo Kim},pdftitle={Gorenstein Prufer domains with non-Prufer integral closure},
  pdfsubject={Divisoriality, ultraproducts, and divided prime quotients}}
\numberwithin{equation}{section}
\newtheorem{theorem}{Theorem}[section]
\newtheorem{proposition}[theorem]{Proposition}
\newtheorem{corollary}[theorem]{Corollary}
\newtheorem{lemma}[theorem]{Lemma}
\theoremstyle{definition}
\newtheorem{definition}[theorem]{Definition}
\newtheorem{example}[theorem]{Example}
\theoremstyle{remark}

\DeclareMathOperator{\Frac}{Frac}
\DeclareMathOperator{\Hom}{Hom}

\DeclareMathOperator{\Rel}{Rel}
\DeclareMathOperator{\Gfd}{Gfd}
\DeclareMathOperator{\Tor}{Tor}

\DeclareMathOperator{\im}{Im}
\DeclareMathOperator{\fd}{fd}

\newcommand{\wGgldim}{\operatorname{wGgldim}}
\newcommand{\wgd}{\operatorname{w.gl.dim}}

\newcommand{\U}{\mathcal U}
\newcommand{\m}{\mathfrak m}
\setlist[enumerate,1]{label=\textup{(\roman*)},leftmargin=2.3em}
\allowdisplaybreaks[2]

\newtheorem*{maintheorem}{Theorem A}
\title[Gorenstein Pr\"ufer domains and integral closure]{Gorenstein Pr\"ufer domains with\protect\\
non-Pr\"ufer integral closure}
\author[Kim]{Hwankoo Kim}
\address{Department of Computer Engineering, Hoseo University,
Asan 31499, Republic of Korea}
\email{hkkim@hoseo.edu}
\date{}
\subjclass[2020]{13D05, 13F05, 13B22, 13C11}
\keywords{Gorenstein Pr\"ufer domain, integral closure, divisorial ideal,
ultraproduct, divided prime, matrix factorization}
\begin{document}
\raggedbottom
\begin{abstract}
There exists a one-dimensional local Gorenstein Pr\"ufer domain whose
integral closure is not Pr\"ufer. We prove that the recently constructed
coherent counterexample to the normalization problem has this stronger
Gorenstein property. The argument establishes permanence of divisoriality
of finitely generated ideals under ultraproducts and quotients by divided
primes. It also supplies uniform matrix factorizations: in an ultraproduct
of plane-curve domains, every nonzero ideal generated by $r$ elements is
a cycle of a two-periodic totally acyclic complex with free terms of
rank $r$. The divided prime defining the one-dimensional example is flat,
so these complexes descend through the quotient and localization.
A specified two-generated ideal remains noninvertible in every integral
overring. Over the normalization, its associated Laurent overring has
nonzero first Tor with every injective cogenerator, although both rings
are Gorenstein flat over the original domain. Examples distinguish this
failure from a nonnormal Gorenstein curve with Pr\"ufer normalization
and show that the periodicity conclusion is stronger than the
Gorenstein Pr\"ufer property alone.
\end{abstract}
\maketitle

\section{Introduction}

Normalization and homological finiteness interact especially strongly in
dimension one. The normalization of a one-dimensional Noetherian domain
is Dedekind by the Krull--Akizuki theorem
\cite[Tag~09IG]{Stacks}. Coherence weakens Noetherianity by controlling
relations among finitely many generators rather than requiring every
ideal to be finitely generated. The classical normalization problem
therefore asks how much ideal-theoretic regularity survives under this
weaker finiteness condition.

Gorenstein Pr\"ufer domains impose an additional homological condition:
all torsion-free modules are Gorenstein flat
\cite[Corollary~2.4]{HLZ}. Equivalently, they are coherent domains in
which every nonzero finitely generated ideal is divisorial
\cite[Theorem~4.2]{QW}. They need not be Pr\"ufer themselves; the cusp
$k[[t^2,t^3]]$ is a familiar example. The separate question in
\cite[Section~9, Problem~13]{GKL} is whether the integral closure of a
Gorenstein Pr\"ufer domain must be Pr\"ufer. It concerns the structure
of the larger ring, not merely its Gorenstein flatness over the
original domain.

Tran, Vo, and Nguyen \cite{TVN} construct a one-dimensional coherent
local domain with non-Pr\"ufer normalization. Their construction begins
with an ultraproduct of plane-curve domains, then takes a divided-prime
quotient and a localization. We prove that the same domain is Gorenstein
Pr\"ufer. We further give periodic complete resolutions for all of its
finitely generated ideals and a specified homological obstruction over
its normalization.

The following statement summarizes the conclusions proved in
Theorems~\ref{thm:main} and~\ref{thm:specified-overring}.

\begin{maintheorem}
There exist a one-dimensional local Gorenstein Pr\"ufer domain $D$ and
nonzero elements $a,b\in D$ with the following properties.
\begin{enumerate}
\item The integral closure $B$ of $D$ in $\Frac D$ is not Pr\"ufer.
In fact, $(a,b)T$ is not invertible for any integral overring $T$ of $D$.
\item For every integer $r\ge1$, each nonzero ideal of $D$ generated
by $r$ elements is a cycle of a two-periodic totally acyclic complex
whose terms are $D^r$.
\item Put $z=b/a$ and $S=B[z,z^{-1}]$. For every injective cogenerator
$E_0$ of $B$-modules,
\[
 \Gfd_DB=\Gfd_DS=0,
 \qquad \fd_DB=\infty,
 \qquad \Tor_1^B(E_0,S)\ne0.
\]
Thus $S$ is Gorenstein flat over $D$ but not over $B$.
\end{enumerate}
\end{maintheorem}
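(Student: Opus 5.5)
We take $D$ to be the Tran--Vo--Nguyen domain \cite{TVN}. Let $R=\prod_{\U}R_n$ be an ultraproduct of plane-curve domains $R_n=k_n[[x,y]]/(f_n)$ (or localizations of affine plane curves), let $P$ be the divided prime used in \cite{TVN}, and put $D=(R/P)_{\m}$. The argument has four stages, one for each layer of this construction: plane curves, ultraproduct, divided quotient, localization. Parts (ii) and the Gorenstein Pr\"ufer property are handled by matrix factorizations.

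First, the plane-curve level. Each $R_n$ is a one-dimensional hypersurface, so it is Gorenstein. Every nonzero ideal $I\subseteq R_n$ is a maximal Cohen--Macaulay module of rank one. By Eisenbud's theorem, $I$ is then the cokernel of a matrix factorization $(\varphi,\psi)$ of $f_n$. The key point is to make this uniform. If $I=(g_1,\dots,g_r)$, we want square $r\times r$ matrices, entrywise bounded in terms of the generators, giving a two-periodic complex $\cdots\to R_n^r\xrightarrow{\varphi}R_n^r\xrightarrow{\psi}R_n^r\to\cdots$ that is exact with exact dual. Exactness here should be expressible by first-order conditions of bounded complexity. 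That requires showing that kernels are generated by the images with a \emph{uniform} bound, and I expect this to be the main obstacle. I would try to obtain it from the lifting $S=k[[x,y]]$. There, the syzygy module of the $r$ lifted generators together with $f_n$ is free of rank $r$ by Hilbert--Burch, since the ideal is $(\tilde g_i)+(f_n)$ and we can adjust to an MCM presentation of rank $r$. This gives $\varphi\psi=\psi\varphi=f_n\cdot\mathrm{id}_r$ with explicit entries. Exactness over $R_n$ then follows formally from $f_n$ being a nonzerodivisor in $S$, with no size bounds needed. Because the identities $\varphi\psi=\psi\varphi=f\cdot\mathrm{id}$ are equations, {\L}o\'s's theorem transfers the factorization to $R$. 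Exactness of the transferred complex then follows from those identities together with $f$ being regular in the ultraproduct of the regular rings $\prod_\U S_n$. Dualizing gives the transposed factorization, so the complex is totally acyclic.

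Second, pass to $D$. Since $P$ is divided and flat (as stated in the abstract), $R/P$ is flat over $R$ after localization; more precisely, $R_P$-modules behave well. Tensoring a totally acyclic complex of free modules with a flat algebra keeps it exact. Applying $\Hom(-,D)$ to the base change is the base change of the dual, so it also stays exact. Every finitely generated ideal of $D$ lifts, after clearing a unit, to one of $R$ with the same number of generators. So (ii) follows, and the localization step is likewise flat. From (ii), each finitely generated ideal $I$ is a cycle in a totally acyclic complex of free modules. Hence $I$ is Gorenstein projective and reflexive, so divisorial. Coherence is \cite{TVN}, so \cite[Theorem~4.2]{QW} gives that $D$ is Gorenstein Pr\"ufer.

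Third, part (i). Here we take $a,b$ to be the images of the elements witnessing non-Pr\"uferness in \cite{TVN}. Suppose $(a,b)T$ were invertible for some integral overring $T$. Then after localizing at a maximal ideal it is principal, generated by $a$ or $b$ up to units by the local invertibility criterion. Then $z$ or $z^{-1}$ lies in $T_{\mathfrak n}\subseteq B_{\mathfrak n}$. The TVN valuation computation on the components excludes this: $z$ and $z^{-1}$ are both non-integral at a common maximal ideal, which is lying over $\m$ since $D$ is local.

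Fourth, part (iii). $B$ and $S$ are torsion-free $D$-modules, so they are Gorenstein flat over a Gorenstein Pr\"ufer domain by \cite[Corollary~2.4]{HLZ}. If $\fd_DB$ were finite, then $B$ would be flat, hence $D\subseteq B$ would be a flat integral extension. A flat integral overring equals a localization-type overring, so it is $D$. That would make $D$ integrally closed and coherent, hence Pr\"ufer by the Gorenstein Pr\"ufer property, contradicting (i). Finally, $\Tor_1^B(E_0,S)\neq0$: since $E_0$ is a cogenerator, it suffices that $S$ is not flat over $B$. Flatness would make $S=B[z,z^{-1}]$ a flat overring in which $(a,b)S=aS$ is invertible. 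Flat overrings satisfy $(a,b)S\cap B$-contraction compatibility. This contraction would force $(a,b)B_{\mathfrak n}$ to be invertible at the bad maximal ideal $\mathfrak n$, which (i) forbids.
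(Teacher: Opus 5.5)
Your plane-curve and ultraproduct stage is sound: {\L}o\'s transfers $\varphi\psi=\psi\varphi=f$ and exactness of the augmentation onto $I$, and $f$ is a nonzerodivisor in the domain $\prod_{\U}S_n$. The Tor claim in (iii), however, rests on a false reduction. Non-flatness of $S$ over $B$ does not force $\Tor_1^B(E_0,S)\ne0$ for a cogenerator $E_0$. Indeed, $\Tor_1(E,-)$ vanishes on every Gorenstein flat module for every injective $E$, and such modules need not be flat. For instance, take $R=k[[t^2,t^3]]$ with quotient field $K$ and maximal ideal $N$. The injective cogenerator $K/R\cong E(k)$ has $\Tor_1^R(K/R,N)$ equal to the torsion of $N$, hence zero, although $N$ is not flat.

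The content of (iii) is precisely that $S$ is not even Gorenstein flat over $B$. The missing idea is the two-chart argument:
\begin{itemize}
\item Normality gives $B[z]\cap B[z^{-1}]=B$ and an exact sequence $0\to B\to B[z]\oplus B[z^{-1}]\to S\to0$. So $\Tor_1^B(E_0,S)=0$ would make $E_0\to(E_0\otimes_BB[z])\oplus(E_0\otimes_BB[z^{-1}])$ injective.
\item With $I=(a,b)B$ non-invertible, the cogenerator property gives $0\ne e\in E_0$ killed by $II^{-1}$.
\item Every relation $ra+sb=0$ has $(r,s)=(bc,-ac)$ with $c\in I^{-1}$. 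Injectivity of $E_0$ therefore gives $e_1,e_2$ with $ae_1=e=be_2$ and $be_1=0=ae_2$.
\item Then $e\otimes1=be_2\otimes1=ae_2\otimes z=0$ in $E_0\otimes_BB[z]$, and symmetrically in the other chart. This contradicts injectivity.
\end{itemize}

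Other steps also need repair.

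\emph{Passing to $D$.} You assert that $R/P$, or its localization, is flat over $R$. It is not, being a torsion $R$-module. What flatness of the \emph{ideal} $P$ gives is $\Tor_i^R(R/P,M)=0$ for $i>0$ and every torsion-free $M$, since $P\otimes_RM\to M$ is then injective. Apply this to the cycles of the complex and of its dual, which are submodules of free modules; this is what keeps both exact after reduction. Dividedness is then needed to identify the reduced cycle: $R/P\otimes_RI=I/PI$, and $P=a_jP\subseteq PI$ for a generator $a_j\notin P$ makes this $J$.

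\emph{Flatness of $P$.} This is not in \cite{TVN} and must be proved. The paper puts every finite subset of $P$ inside some $cR$ with $c\in P$, using the conductor $t^{n^2}\mathbb Q[[t]]\subseteq\mathbb Q[[t^n,t^{n+1}]]$. Hence $P$ is a directed union of free rank-one modules.

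\emph{Part (i).} No valuation computation on the components can exclude $z\in T_{\mathfrak n}$, since each $z_n=t^{d_n}$ lies in $\mathbb Q[[t]]$, the normalization of the $n$th factor. The obstruction is the polynomial fiber $D[z]/\m D[z]\cong(D/\m)[Z]$. It gives primes $\m D[z]\subsetneq\m D[z]+zD[z]$ over $\m$. Going-up and incomparability then give primes $\mathfrak Q_0\subsetneq\mathfrak Q_1$ of $T[z]$ over a single prime $\mathfrak p$ of $T$. Now $z\in T_{\mathfrak p}$ collapses the chain, and $z^{-1}\in T_{\mathfrak p}$ makes $z\in\mathfrak Q_1$ a unit.

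\emph{Flat dimension of $B$.} The step ``$\fd_DB<\infty$ implies $B$ flat'' needs the fact that Gorenstein flat dimension and flat dimension agree when the latter is finite.
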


The construction of $D$, its dimension, and its polynomial-fiber
obstruction are due to \cite{TVN}. The additional assertions here are
its Gorenstein Pr\"ufer property, the uniform complete resolutions, and
the specified noninvertibility and Tor obstruction. In particular, the
matrix-factorization construction over a hypersurface is Eisenbud's
\cite{Eisenbud}; our argument chooses matrices of fixed size for a
prescribed generating row and proves that their exactness and dual
exactness survive the ultraproduct and the subsequent quotient. The
proof is independent of any converse characterization using all
overrings of a domain.

%

Section~\ref{sec:transfer} develops the permanence results and the uniform
complete resolutions. Section~\ref{sec:construction} specifies the
one-dimensional domain and transfers those resolutions to it.
Section~\ref{sec:obstruction} identifies the noninvertible ideal and the
Tor obstruction, and compares the example with simpler curves and with
a nonperiodic Gorenstein Pr\"ufer example.
\section{Divisoriality and uniform complete resolutions}\label{sec:transfer}
\label{sec:prelim}

This section separates two permanence questions. Divisoriality can be
transferred by a finite separation argument, whereas coherence needs a
uniform bound on relations. Matrix factorizations then strengthen those
ideal-theoretic conclusions to complete resolutions, and flat divided
primes permit their reduction.

\subsection{Conventions, divisoriality, and localization}

All rings are commutative with identity, homomorphisms preserve the
identity, and modules are unital. Domains have $1\ne0$. The normalization
$\overline A$ of a domain $A$ is its integral closure in $K=\Frac A$;
``normal'' and ``integrally closed'' have this meaning. An overring is
a ring between $A$ and $K$. A fractional ideal is a nonzero submodule
$I\subseteq K$ with $dI\subseteq A$ for some $0\ne d\in A$.
For nonzero fractional ideals $I,J$ of a domain $A$ with quotient
field $K$, set $(I:J)=\{x\in K:xJ\subseteq I\}$. In particular, put $I^{-1}=(A:I)$ and $I_v=(A:(A:I))$.
\begin{definition}\label{def:fgv}
An ideal is \emph{divisorial} if $I=I_v$. We call $A$ an
\emph{FGV-domain} if every nonzero finitely generated integral ideal
is divisorial. Scaling shows that the same then holds for nonzero
finitely generated fractional ideals. This condition does not require
all ideals of $A$ to be divisorial.
A ring is \emph{coherent} if every finitely generated ideal is finitely
presented, and a domain is \emph{Pr\"ufer} if every nonzero finitely
generated ideal is invertible.
\end{definition}

\begin{definition}\label{def:normalization-gf}
A module is Gorenstein flat if it is a cycle in an
exact, doubly infinite complex of flat modules which remains exact
after tensoring with every injective module. Its Gorenstein flat
dimension is the least length of a resolution by such modules, or
infinity if there is no finite resolution; see \cite{CFH}. We put $\Gfd_A0=0$ and write $\wGgldim(A)=\sup_M\Gfd_AM$.
A domain is \emph{Gorenstein Pr\"ufer}, or $G$-Pr\"ufer, if it is
coherent and $\wGgldim(A)\le1$.
\end{definition}

We recall the characterization used throughout.

\begin{theorem}[Qiao--Wang; Hu--Lim--Zhou]
\label{thm:criterion}
For a domain $A$, the following are equivalent:
\begin{enumerate}
\item $A$ is $G$-Pr\"ufer.
\item $A$ is coherent and FGV.
\item Every torsion-free $A$-module is Gorenstein flat.
\end{enumerate}
\end{theorem}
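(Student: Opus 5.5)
This theorem is cited from Qiao--Wang \cite[Theorem~4.2]{QW} and Hu--Lim--Zhou \cite[Corollary~2.4]{HLZ}, so the plan is to assemble the three implications from standard Gorenstein flat theory over coherent rings. I will prove (i)$\Leftrightarrow$(ii) and (i)$\Leftrightarrow$(iii). The tool throughout is the coherent-ring characterization of Gorenstein flat modules: over a coherent ring, a module $M$ with $\Gfd_A M<\infty$ is Gorenstein flat exactly when $\Tor_i^A(E,M)=0$ for all $i\ge1$ and all injective $E$ (Holm, Christensen--Frankild--Holm \cite{CFH}). I also use that $\wGgldim(A)\le n$ forces every module to have Gorenstein flat dimension at most $n$, and that syzygies of Gorenstein flat modules are again Gorenstein flat.

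For (i)$\Rightarrow$(iii), let $M$ be torsion-free. Since $A$ is a domain, $M$ embeds in $M\otimes_A K$, a $K$-vector space and hence flat. From
\[
0\to M\to M\otimes_A K\to C\to0
\]
and $\Gfd_A C\le1$, the kernel of a surjection from a flat module onto $C$ has Gorenstein flat dimension $0$. It remains to pass to $M$ itself. Rather than a Schanuel-type comparison, I will argue with Tor. For injective $E$ and $i\ge2$, $\Tor_i(E,M)\cong\Tor_{i+1}(E,C)=0$ because $\Gfd C\le1$. For $i=1$, the group $\Tor_1(E,M)$ is the kernel of $E\otimes M\to E\otimes(M\otimes K)$. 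I will show this map is injective: write $M$ as a direct limit of finitely generated torsion-free submodules, each embedding in a free module over the coherent domain, and use that $E\otimes-$ commutes with direct limits. This reduces the claim to showing that finitely generated torsion-free modules $N$ have $\Tor_1(E,N)=0$.

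That last point is the main obstacle, and this is where (ii) comes in. The plan is to prove (ii)$\Rightarrow$(iii) directly and deduce (i)$\Rightarrow$(iii) from (i)$\Rightarrow$(ii).

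For (i)$\Rightarrow$(ii), coherence holds by definition. For a nonzero finitely generated ideal $I$, the quotient $A/I$ has Gorenstein flat dimension at most $1$, so $I$ is Gorenstein flat and finitely presented. Over a coherent ring, a finitely presented Gorenstein flat module is Gorenstein projective, and hence reflexive with respect to $\Hom(-,A)$. For a nonzero ideal of a domain, $\Hom(I,A)\cong I^{-1}$ and the double dual is $I_v$, so reflexivity gives $I=I_v$.

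For (ii)$\Rightarrow$(iii), let $N$ be finitely generated torsion-free. I will build a complete resolution by iterating duals. Coherence makes $N^{*}=\Hom(N,A)$ finitely presented. FGV should make $N$ reflexive, which I plan to prove by induction on rank, embedding $N$ into $A^n$ and using divisoriality of fractional ideals. FGV should also make $\Ext^i(N,A)=0$ for $i\ge1$, since it gives $\Ext^1(A/I,A)\cong I^{-1}/A$-type computations and injectivity of $A$ on the relevant cosets. Together these let me splice a free resolution of $N$ with the dual of a free resolution of $N^{*}$ into a totally acyclic complex of finitely generated free modules. Tensoring with an injective $E$ preserves exactness, by $\Hom$--tensor duality over coherent rings. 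Hence $N$ is Gorenstein flat, and a direct limit closes the argument for arbitrary torsion-free $M$, since Gorenstein flat modules are closed under direct limits over coherent rings.

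Finally, (iii)$\Rightarrow$(i). Every module $M$ has a flat precover $F\to M$ whose kernel is a submodule of a flat module, hence torsion-free, hence Gorenstein flat, so $\wGgldim(A)\le1$. Coherence is proved as follows. For a finitely generated ideal $I$, the ideal $I$ is torsion-free and so Gorenstein flat, and $A/I$ has Gorenstein flat dimension at most $1$. I would extract finite presentation via Ding--Mao type arguments, and this step is the one I would defer to \cite{QW} if it resisted a direct argument.
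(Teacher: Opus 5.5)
The paper does not prove this theorem: it quotes (i)$\Leftrightarrow$(ii) from \cite[Theorem~4.2]{QW} and the torsion-free characterization from \cite[Corollary~2.4]{HLZ}. Your self-contained attempt leaves the substantive steps unproved.

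The main gap is in (ii)$\Rightarrow$(iii). Everything there rests on $\operatorname{Ext}^1_A(I,A)=0$ for nonzero finitely generated ideals $I$, equivalently $\operatorname{Ext}^2_A(A/I,A)=0$.
\begin{itemize}
\item The inductive step of your rank induction for reflexivity needs ${N}^*\to{N'}^*$ to be surjective, i.e.\ $\operatorname{Ext}^1_A(N'',A)=0$ for the rank-one quotient $N''$. Divisoriality of $N''$ alone does not let you extend functionals from $N'$ to $N$.
\item Likewise, $\operatorname{Ext}^i_A(N,A)=0$ for $i\ge1$ reduces to the same vanishing, via rank-one quotients and finitely generated torsion-free syzygies.
\end{itemize}
The reason you give, $\operatorname{Ext}^1_A(A/I,A)\cong I^{-1}/A$, concerns the wrong group. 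It is usually nonzero and says nothing about $\operatorname{Ext}^1_A(I,A)$. So the crux is left at ``should''; this is where the substance of coherent FGV $\Rightarrow$ $G$-Pr\"ufer lies.

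One way to supply it:
\begin{itemize}
\item For $0\ne a\in I$, Rees's lemma gives $\operatorname{Ext}^2_A(A/I,A)\cong\operatorname{Ext}^1_{A/aA}(A/I,A/aA)$.
\item For finitely generated $J,J'\supseteq aA$, coherence and FGV give $(aA:J)=aJ^{-1}$, $(aA:aJ^{-1})=J_v=J$, and $(J\cap J')^{-1}=(J^{-1}+{J'}^{-1})_v=J^{-1}+{J'}^{-1}$.
\item Hence finitely generated ideals $L,L'$ of $A/aA$ satisfy $\operatorname{Ann}\operatorname{Ann}L=L$ and $\operatorname{Ann}(L\cap L')=\operatorname{Ann}L+\operatorname{Ann}L'$.
\item Induction on the number of generators then shows that every homomorphism from a finitely generated ideal of $A/aA$ into $A/aA$ is a multiplication.
\end{itemize}

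In (iii)$\Rightarrow$(i) you defer coherence entirely, so that implication still rests on the citation, just as in the paper.

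Two smaller points.
\begin{itemize}
\item Your rerouting of (i)$\Rightarrow$(iii) through (ii) comes from a Tor slip. From $0\to M\to M\otimes_AK\to C\to0$ with $M\otimes_AK$ flat, one gets $\Tor_1^A(E,M)\cong\Tor_2^A(E,C)=0$. The kernel of $E\otimes_AM\to E\otimes_A(M\otimes_AK)$ is the image of $\Tor_1^A(E,C)$, not $\Tor_1^A(E,M)$. So (i)$\Rightarrow$(iii) is immediate: $M$ is the kernel of a surjection from a flat module onto a module of Gorenstein flat dimension at most one, hence Gorenstein flat by the coherent-ring criterion you quote.
\item In (i)$\Rightarrow$(ii), ``finitely presented Gorenstein flat implies Gorenstein projective over a coherent ring'' is doing real work and needs a proof or reference. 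You can avoid it. From a complete flat resolution, $I$ embeds in a flat $F$ with $F/I$ torsion-free. Since $I$ is finitely presented, $\Hom_A(I,F)\cong I^{-1}\otimes_AF$, so the inclusion extends to an injective map $I_v\to F$. Its image is torsion modulo $I$, which forces $I_v=I$.
\end{itemize}
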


The equivalence of the first two conditions is
\cite[Theorem~4.2]{QW}; the torsion-free characterization is
\cite[Corollary~2.4]{HLZ}. In particular, every overring of a
$G$-Pr\"ufer domain is Gorenstein flat over the base domain.

The next elementary observation will be used for localization and
for comparison with the classical theory.

\begin{lemma}\label{lem:inverse}
Let $A$ be a coherent domain and let $I$ be a nonzero finitely
generated fractional ideal. Then $I^{-1}$ is a finitely generated
fractional ideal. If $S\subseteq A\setminus\{0\}$ is multiplicatively
closed, then
\begin{equation}\label{eq:localv}
 (I^{-1})_S=(A_S:I_S),\qquad (I_v)_S=(I_S)_v,
\end{equation}
where the final $v$-operation is taken over $A_S$. Consequently,
every localization of a coherent FGV-domain is a coherent FGV-domain.
\end{lemma}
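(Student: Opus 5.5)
Write $I=(x_1,\dots,x_n)$ with $x_i\in K$ nonzero; after scaling by a common denominator $d$ (which changes $I^{-1}$ only by the unit factor $d$ of $K$), we may assume $I\subseteq A$. The plan is to identify $I^{-1}$ with a kernel. Since $x_1\neq0$, an element $y\in I^{-1}$ is determined by $c=yx_1\in A$, and $yx_i\in A$ for all $i$. Consider the map
\[
 \varphi\colon A^n\to A,\qquad (c_1,\dots,c_n)\mapsto \sum_i c_ix_i .
\]
Sending $y\mapsto (yx_1,\dots,yx_n)$ is an $A$-linear injection of $I^{-1}$ onto $\{(c_i)\in A^n : c_ix_j=c_jx_i\text{ for all }i,j\}$. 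This set is the kernel of the map
\[
 A^n\to A^{n(n-1)/2},\qquad (c_i)\mapsto (c_ix_j-c_jx_i)_{i<j},
\]
a map between finitely generated free modules. Over a coherent ring, the kernel of such a map is finitely generated: the image is a finitely generated submodule of a free module, hence finitely presented, so the kernel is finitely generated. Thus $I^{-1}$ is finitely generated. It is fractional because $x_1I^{-1}\subseteq A$. Alternatively, one may use that $I^{-1}\cong (x_1)\cap\dots$ style intersections $\bigcap_i x_i^{-1}A$, together with the fact that finite intersections of finitely generated ideals in a coherent ring are finitely generated. I would present the kernel argument as the cleaner of the two.

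For \eqref{eq:localv}, the inclusion $(I^{-1})_S\subseteq (A_S:I_S)$ is immediate. For the reverse, take $y\in(A_S:I_S)$. Then $yx_i\in A_S$ for each $i$, so there is a single $s\in S$ (the product of finitely many denominators) with $syx_i\in A$ for all $i$. Hence $sy\in I^{-1}$ and $y\in (I^{-1})_S$. This step uses only that $I$ is finitely generated.

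Applying the same identity to the fractional ideal $J=I^{-1}$, which is finitely generated by the first part, gives
\[
 (I_v)_S=(A:J)_S=(A_S:J_S)=(A_S:(A_S:I_S))=(I_S)_v .
\]
The only point needing care is that the second application requires $J$ to be finitely generated; this is exactly where coherence is needed.

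For the final assertion, coherence localizes, as a standard fact: finitely generated ideals of $A_S$ are extended from finitely generated ideals of $A$, and finite presentations localize by flatness of $A_S$. For FGV, a nonzero finitely generated ideal of $A_S$ has the form $I_S$ with $I\subseteq A$ finitely generated and nonzero. Then $I_v=I$ gives $(I_S)_v=(I_v)_S=I_S$. I expect no real obstacle. The main point is to isolate where coherence enters, namely the finite generation of $I^{-1}$ used in the second localization identity.
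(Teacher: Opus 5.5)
Your proof is correct and follows the paper's outline. Coherence gives finite generation of $I^{-1}$, finite generation of $I$ gives $(I^{-1})_S=(A_S:I_S)$, a second application to $I^{-1}$ gives $(I_v)_S=(I_S)_v$, and extending finitely generated ideals from $A$ gives the final assertion. The differences are minor: you realize $I^{-1}$ as the kernel of a map between finite free modules rather than as the scaled intersection $\bigcap_i a_i^{-1}A$ of principal ideals, and you verify the first localization identity directly where the paper cites Wang--Kim.
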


\begin{proof}
After scaling, assume $I=(a_1,\ldots,a_r)\subseteq A$, with each
$a_i\ne0$. Then $I^{-1}=\bigcap_i a_i^{-1}A$. A common nonzero
multiple of this intersection is an intersection of finitely many
principal integral ideals, which is finitely generated by coherence.
This proves the first assertion.

For the first equality in \eqref{eq:localv} follows from \cite[Proposition 1.5.13(3)]{WangKim2024}.
Apply it again to the finitely generated fractional
ideal $I^{-1}$ to obtain the second equality. Coherence localizes,
and every finitely generated ideal of $A_S$ is the extension of a
finitely generated ideal of $A$. The last assertion follows.
\end{proof}

We will also use two standard consequences. An integrally closed
$G$-Pr\"ufer domain is Pr\"ufer \cite[p.~1450]{HWX}, and every
one-dimensional Noetherian Gorenstein local domain is $G$-Pr\"ufer;
see \cite{CFH,HLZ}. In the latter case, the double-annihilator property
in the Artinian Gorenstein quotients $A/aA$ shows that every ideal
of $A$ is divisorial.

The following transfer arguments use finite lists of coefficients.
A uniform bound on relation generators will ensure coherence of the
ultraproduct.

\subsection{Ultraproducts and uniform coherence}

We first recall the ultraproduct terminology and establish the
coordinatewise ideal-membership criterion used in the uniform
coherence arguments.

Let $\Lambda$ be an infinite set. An \emph{ultrafilter} $\U$ on
$\Lambda$ is a family of subsets of $\Lambda$ containing $\Lambda$
but not $\varnothing$, closed under finite intersections and passage
to supersets, and satisfying the following dichotomy: for each
$S\subseteq\Lambda$, exactly one of $S$ and $\Lambda\setminus S$
belongs to $\U$. The ultrafilter is \emph{nonprincipal} if it contains
no finite subset of $\Lambda$, or equivalently, if it contains every
cofinite subset of $\Lambda$.

A property holds for \emph{$\U$-almost all} $\lambda\in\Lambda$ if
the set of indices for which it holds belongs to $\U$. In particular,
for a nonprincipal ultrafilter on $\{2,3,\ldots\}$, every property
that holds for all sufficiently large indices holds for $\U$-almost
all indices. The elementary facts below apply to arbitrary
ultrafilters; nonprincipality will be used in the subsequent
construction.

For a family $(A_\lambda)_{\lambda\in\Lambda}$ of commutative rings
with identity, define
\[
 \mathcal Z_{\U}
 =
 \left\{(a_\lambda)\in\prod_{\lambda\in\Lambda}A_\lambda:
   \{\lambda\in\Lambda:a_\lambda=0\}\in\U
 \right\}.
\]
This is an ideal of the direct product. The \emph{ultraproduct} is
the quotient ring
\[
 A=\prod_{\U}A_\lambda
   =\left(\prod_{\lambda\in\Lambda}A_\lambda\right)/\mathcal Z_{\U}.
\]
We denote the class of $(a_\lambda)$ by $[a_\lambda]$. Thus
$[a_\lambda]=[b_\lambda]$ precisely when $a_\lambda=b_\lambda$ for
$\U$-almost all $\lambda$. Addition and multiplication are induced
by the coordinatewise operations.

If every $A_\lambda$ is an integral domain, then $A$ is an integral
domain. Indeed, if $[a_\lambda][b_\lambda]=0$, then
$\{\lambda:a_\lambda b_\lambda=0\}\in\U$. Since each coordinate ring
is a domain, this set is the union of
$\{\lambda:a_\lambda=0\}$ and $\{\lambda:b_\lambda=0\}$.
The ultrafilter property places at least one of these two sets in
$\U$, so one of the two classes is zero. Moreover, $1\ne0$ in $A$.

The next lemma shows that membership in an ideal with a fixed finite
list of generators can be checked coordinatewise.

\begin{lemma}\label{lem:ultraproduct-ideal-membership}
Let $\U$ be an ultrafilter on a set $\Lambda$, let
$(A_\lambda)_{\lambda\in\Lambda}$ be a family of commutative rings
with identity, and put $A=\prod_{\U}A_\lambda$.
Fix a positive integer $r$ and elements
$a_i=[a_{i,\lambda}]\in A$ for $1\le i\le r$. Set
$I=(a_1,\ldots,a_r)A$ and
$I_\lambda=(a_{1,\lambda},\ldots,a_{r,\lambda})A_\lambda$.
For every $z=[z_\lambda]\in A$, one has
\begin{equation}\label{eq:membership}
 z\in I
 \quad\text{if and only if}\quad
 z_\lambda\in I_\lambda
 \quad\text{for $\U$-almost all $\lambda\in\Lambda$}.
\end{equation}
\end{lemma}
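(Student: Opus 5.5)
The proof goes in two directions, both by unwinding the definition of equality in the ultraproduct. The only tool needed is that $\U$ is closed under finite intersections and supersets. The point is that the number of generators $r$ is fixed. So a representation $z=\sum_i c_ia_i$ involves only finitely many (namely $r$) coefficients, each of which is a single element of $A$ with coordinate representatives.

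For the forward implication, suppose $z\in I$, and write $z=\sum_{i=1}^r c_ia_i$ with $c_i=[c_{i,\lambda}]\in A$. The equality of classes means that the set
\[
 S=\Bigl\{\lambda:\ z_\lambda=\sum_{i=1}^r c_{i,\lambda}a_{i,\lambda}\Bigr\}
\]
belongs to $\U$; this is just the definition of $\mathcal Z_\U$ applied to the difference $z-\sum_i c_ia_i$. For every $\lambda\in S$ we have $z_\lambda\in I_\lambda$. Since $\{\lambda: z_\lambda\in I_\lambda\}$ contains $S$, it belongs to $\U$ by closure under supersets.

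For the converse, suppose $T=\{\lambda: z_\lambda\in I_\lambda\}\in\U$. For each $\lambda\in T$, choose (by choice) coefficients $c_{i,\lambda}\in A_\lambda$ with $z_\lambda=\sum_i c_{i,\lambda}a_{i,\lambda}$. For $\lambda\notin T$, put $c_{i,\lambda}=0$. Set $c_i=[c_{i,\lambda}]\in A$. Then $z-\sum_i c_ia_i$ has representative coordinates vanishing on $T\in\U$, so it lies in $\mathcal Z_\U$ and $z=\sum_i c_ia_i\in I$. One should also note that the criterion is independent of the chosen representatives $(a_{i,\lambda})$ and $(z_\lambda)$. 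Changing representatives alters only a $\U$-null set of coordinates, so intersecting with a set in $\U$ settles this.

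There is no real obstacle. The only point needing care is the role of finiteness: for a converse-direction argument with infinitely many generators, the coefficient choices could not be assembled into finitely many elements of $A$. Here the fixed $r$ makes the assembly into $c_1,\dots,c_r$ immediate. No further hypotheses are used: the rings need not be domains, and $\U$ need not be nonprincipal.
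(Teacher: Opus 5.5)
Your proof is correct and takes essentially the same approach as the paper: the forward direction unwinds equality of classes to get the coordinate relation on a member of $\U$, and the converse chooses coefficients coordinatewise on that member and pads with zeros elsewhere. Your extra remarks on independence of representatives and on why the fixed $r$ matters are accurate, though the proof itself does not need them.
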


\begin{proof}
Suppose $z=\sum_{i=1}^{r}a_i b_i$ for elements
$b_i=[b_{i,\lambda}]\in A$. Equality in the ultraproduct means that
$z_\lambda=\sum_{i=1}^{r}a_{i,\lambda}b_{i,\lambda}$ for
$\U$-almost all $\lambda$. Hence $z_\lambda\in I_\lambda$ on a
member of $\U$.

Conversely, suppose $S=\{\lambda:z_\lambda\in I_\lambda\}$ belongs
to $\U$. For each $\lambda\in S$, choose coefficients
$b_{1,\lambda},\ldots,b_{r,\lambda}\in A_\lambda$ such that
$z_\lambda=\sum_{i=1}^{r}a_{i,\lambda}b_{i,\lambda}$, and set all
these coefficients equal to zero when $\lambda\notin S$.
Then $b_i=[b_{i,\lambda}]\in A$ satisfies
$z=\sum_{i=1}^{r}a_i b_i$, so $z\in I$.
\end{proof}

The fixed number of generators allows the coordinatewise
coefficients to be assembled into finitely many elements of the
ultraproduct. In the coherence arguments below, an analogous
uniform bound on the generators of relation modules will be needed.

Divisoriality has the separation form needed for this transfer.

\begin{proposition}\label{prop:ultra-fgv}
Let $\Lambda$ be a nonempty set, let $\mathcal U$ be an ultrafilter on
$\Lambda$, and let each $A_\lambda$ be an FGV-domain. Then
$\prod_{\mathcal U}A_\lambda$ is an FGV-domain.
\end{proposition}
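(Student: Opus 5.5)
The plan is to reduce divisoriality to an elementwise separation condition involving only principal ideals, and then transfer that condition coordinatewise using Lemma~\ref{lem:ultraproduct-ideal-membership} with $r=1$. By the remark following Definition~\ref{def:fgv}, it suffices to treat nonzero finitely generated \emph{integral} ideals. First I record the criterion. For a nonzero ideal $I$ of a domain $R$ and $z\in R$, one has $z\in I_v$ if and only if, for all $c,d\in R$ with $d\ne0$ and $cI\subseteq dR$, also $cz\in dR$. This holds because every $y\in I^{-1}=(R:I)$ can be written as $y=c/d$ with $c,d\in R$ and $d\ne0$. Then $yI\subseteq R$ means $cI\subseteq dR$, and $yz\in R$ means $cz\in dR$. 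If $I=(a_1,\dots,a_r)$, the condition $cI\subseteq dR$ is equivalent to $ca_i\in dR$ for each $i$.

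Now let $A=\prod_{\mathcal U}A_\lambda$ and $I=(a_1,\dots,a_r)A\ne0$ with $a_i=[a_{i,\lambda}]$. Since $I\subseteq I_v$ always holds, I only need to show that every $z=[z_\lambda]\notin I$ satisfies $z\notin I_v$.

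\emph{Step 1: pass to coordinates.} Some $a_j$ is nonzero, so $a_{j,\lambda}\ne0$ for $\mathcal U$-almost all $\lambda$. Hence $I_\lambda=(a_{1,\lambda},\dots,a_{r,\lambda})\ne0$ on a set in $\mathcal U$. By Lemma~\ref{lem:ultraproduct-ideal-membership}, the set $\{\lambda: z_\lambda\in I_\lambda\}$ does not belong to $\mathcal U$. By the ultrafilter dichotomy its complement does. Intersecting these two sets gives $S\in\mathcal U$ on which $0\ne I_\lambda$ and $z_\lambda\notin I_\lambda$.

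\emph{Step 2: separate in each coordinate.} For $\lambda\in S$, the FGV property gives $I_\lambda=(I_\lambda)_v$, so $z_\lambda\notin(I_\lambda)_v$. The criterion then yields $c_\lambda,d_\lambda\in A_\lambda$ with $d_\lambda\ne0$, with $c_\lambda a_{i,\lambda}\in d_\lambda A_\lambda$ for all $i$, and with $c_\lambda z_\lambda\notin d_\lambda A_\lambda$. For $\lambda\notin S$, I set $c_\lambda=0$ and $d_\lambda=1$.

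\emph{Step 3: assemble.} Put $c=[c_\lambda]$ and $d=[d_\lambda]$. Then $d\ne0$, because every coordinate is nonzero. Applying Lemma~\ref{lem:ultraproduct-ideal-membership} to the one-generated ideal $dA$ with $r=1$, I get $ca_i\in dA$ for each $i$, since the coordinatewise membership holds on $S\in\mathcal U$. On the other hand, $c_\lambda z_\lambda\notin d_\lambda A_\lambda$ for all $\lambda\in S$. By the dichotomy, the set where $c_\lambda z_\lambda\in d_\lambda A_\lambda$ is not in $\mathcal U$, so the lemma gives $cz\notin dA$. By the criterion, $z\notin I_v$. Therefore $I=I_v$, and $A$ is an FGV-domain; $A$ is a domain by the remark preceding Lemma~\ref{lem:ultraproduct-ideal-membership}.

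The only delicate point is Step~1 together with the reformulation. The ultrafilter property must be used to turn $z\notin I$ into $z_\lambda\notin I_\lambda$ almost everywhere; the membership lemma gives only the positive direction directly. Separately, the separating fraction $c/d$ must be encoded through the principal ideal $dA$, so that membership uses a fixed number of generators. No coherence or uniform bound is needed here, because each witness is a single pair $(c_\lambda,d_\lambda)$.
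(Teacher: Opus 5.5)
Your proof is correct and takes the same approach as the paper. You isolate a set in $\mathcal U$ where $I_\lambda\ne0$ and $z_\lambda\notin I_\lambda$, take coordinatewise separating fractions $c_\lambda/d_\lambda$ from the FGV property, and assemble them into $c/d\in\Frac(A)$; encoding the two checks as membership in $dA$ via Lemma~\ref{lem:ultraproduct-ideal-membership} with $r=1$ is just a cleaner form of the paper's ``representatives'' step. One small point should be made explicit: restricting to $z\in A\setminus I$ uses $I_v\subseteq A$, which holds because $1\in I^{-1}$, as the paper notes at the end of its proof.
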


\begin{proof}
Let $A=\prod_{\U}A_\lambda$, with each $A_\lambda$ FGV, and take a
nonzero finitely generated ideal $I=(a_1,\ldots,a_r)$ of $A$.
For $z\in A\setminus I$, put
$I_\lambda=(a_{1,\lambda},\ldots,a_{r,\lambda})A_\lambda$.
By \eqref{eq:membership}, there is a member $E$ of $\U$ on which
$I_\lambda\ne0$ and $z_\lambda\notin I_\lambda$.

For each $\lambda\in E$, divisoriality supplies
$q_\lambda\in\Frac(A_\lambda)$ such that
$q_\lambda I_\lambda\subseteq A_\lambda$ but
$q_\lambda z_\lambda\notin A_\lambda$. Write
$q_\lambda=b_\lambda/c_\lambda$, with $c_\lambda\ne0$, and put
$b_\lambda=0$, $c_\lambda=1$ outside $E$. Then
$q=[b_\lambda]/[c_\lambda]\in\Frac(A)$ satisfies $qI\subseteq A$.
If $qz\in A$, representatives for this product would give
$q_\lambda z_\lambda\in A_\lambda$ almost everywhere, a
contradiction. Hence $z\notin I_v$.

Since $1\in I^{-1}$, one has $I_v\subseteq A$. The separation
argument proves $I_v\subseteq I$, and the reverse inclusion is
automatic.
\end{proof}

Coherence requires a separate uniform bound. For a finite row
$\boldsymbol a=(a_1,\ldots,a_r)$ over a ring $B$, put
$\Rel_B(\boldsymbol a)=\ker(B^r\xrightarrow{\boldsymbol a}B)$.

\begin{lemma}\label{lem:uniform}
Let $\Lambda$ be a nonempty set, let $\mathcal U$ be an ultrafilter on
$\Lambda$, and let $(A_\lambda)_{\lambda\in\Lambda}$ be domains.
Suppose that for
each $r\geq1$ there is an integer $h(r)$ such that every relation
module on $r$ elements of every $A_\lambda$ can be generated by at
most $h(r)$ elements. Then $\prod_{\U}A_\lambda$ is coherent.
If all $A_\lambda$ are FGV-domains, the ultraproduct is $G$-Pr\"ufer.
\end{lemma}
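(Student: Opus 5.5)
The plan is to show directly that for any finite row $\boldsymbol a=(a_1,\ldots,a_r)$ over $A=\prod_{\U}A_\lambda$, the relation module $\Rel_A(\boldsymbol a)$ is generated by at most $h(r)$ elements. Coherence then follows, because a ring is coherent exactly when every finitely generated ideal is finitely presented. For $I=(a_1,\ldots,a_r)$, finite presentation is equivalent to finite generation of the kernel of $A^r\to I$, and that kernel is precisely $\Rel_A(\boldsymbol a)$.

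First I will choose representatives $a_i=[a_{i,\lambda}]$. For each $\lambda$, the hypothesis gives relation vectors $\boldsymbol g^{(1)}_\lambda,\ldots,\boldsymbol g^{(h)}_\lambda\in A_\lambda^r$, with $h=h(r)$, that generate $\Rel_{A_\lambda}(\boldsymbol a_\lambda)$; if fewer are needed, I pad with zero vectors. I then assemble these coordinatewise into $\boldsymbol g^{(j)}=[\boldsymbol g^{(j)}_\lambda]\in A^r$. Since $\sum_i a_{i,\lambda}g^{(j)}_{i,\lambda}=0$ for every $\lambda$, each $\boldsymbol g^{(j)}$ lies in $\Rel_A(\boldsymbol a)$.

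Next comes the reverse inclusion. Let $\boldsymbol x=[\boldsymbol x_\lambda]\in\Rel_A(\boldsymbol a)$. Then $\sum_i a_{i,\lambda}x_{i,\lambda}=0$ on a set $E\in\U$. For $\lambda\in E$, I write $\boldsymbol x_\lambda=\sum_j c_{j,\lambda}\boldsymbol g^{(j)}_\lambda$, and I set $c_{j,\lambda}=0$ off $E$. Then $\boldsymbol x=\sum_j[c_{j,\lambda}]\boldsymbol g^{(j)}$ in $A^r$, since equality holds on $E$. This is the module analogue of Lemma~\ref{lem:ultraproduct-ideal-membership}. The whole point is that the bound $h(r)$ is uniform: it is what lets finitely many coordinatewise generator families assemble into finitely many elements of the ultraproduct. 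Without it, the number of generators could grow without bound along $\U$, and no finite family would survive.

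For the last assertion, Proposition~\ref{prop:ultra-fgv} shows that the ultraproduct is FGV. Because ultraproducts of domains are domains, as noted above, Theorem~\ref{thm:criterion} (ii)$\Rightarrow$(i) then yields that it is $G$-Pr\"ufer. I expect no real obstacle here; the only care needed is the bookkeeping of indices where relations fail, which is handled by modifying representatives off a set in $\U$.
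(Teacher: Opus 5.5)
Your proposal is correct and matches the paper's proof essentially step for step. You choose $h(r)$ coordinatewise generators padded with zeros, assemble them into relations over the ultraproduct, and extend the coefficients of an arbitrary relation by zero off a member of $\mathcal U$; the $G$-Pr\"ufer assertion then follows from Proposition~\ref{prop:ultra-fgv} and Theorem~\ref{thm:criterion}, where your explicit remark that the ultraproduct is a domain supplies a hypothesis the paper leaves implicit.
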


\begin{proof}
For a row over the ultraproduct, choose $h(r)$ generators of the
relation module in each coordinate, padding with zeros. Their
classes are relations over the ultraproduct. An arbitrary relation
holds coordinatewise on a member of $\U$ and there is a linear
combination of the chosen generators on that set. Extend its
coefficients by zero elsewhere. This proves that the chosen
$h(r)$ classes generate the relation module. The finite-row
criterion proves coherence. The final assertion follows from
Proposition~\ref{prop:ultra-fgv} and Theorem~\ref{thm:criterion}.
\end{proof}

\subsection{Divided-prime quotients}

The required quotient operation is much more restrictive than
passage to an arbitrary prime quotient.

\begin{definition}\label{def:divided}
A prime ideal $P$ of a domain $A$ will be called \emph{divided} if
\begin{equation}\label{eq:divided}
 aP=P\qquad\text{for every }a\in A\setminus P.
\end{equation}
Equivalently, $P$ is comparable by inclusion with every principal
ideal of $A$. We use the explicit condition \eqref{eq:divided}
throughout.
\end{definition}

For the equivalence, if $a\notin P$ and $P\subseteq aA$, writing
$p=ab$ and using primality gives $b\in P$. The converse follows
immediately from \eqref{eq:divided}.

The divided condition identifies the coefficient domain inside its
localization and ensures that a nonzero lifted ideal contains the prime.

\begin{lemma}\label{lem:pullback}
Let $P$ be a divided prime of a domain $A$, put $B=A/P$, and let
$\pi:A_P\to\Frac(B)$ be the residue map. Then
\begin{equation}\label{eq:pullback}
 PA_P=P\quad\text{inside }\Frac(A),
 \qquad \pi^{-1}(B)=A\quad\text{inside }A_P.
\end{equation}
Moreover, for any nonzero finitely generated ideal
$J=(\bar a_1,\ldots,\bar a_r)$ of $B$, the lifted ideal
$I=(a_1,\ldots,a_r)A$ contains $P$ and $J=I/P$.
\end{lemma}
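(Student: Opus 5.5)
The plan is to prove the two identities in \eqref{eq:pullback} directly from the divided condition \eqref{eq:divided}, and then derive the statement about lifted ideals from the comparability of $P$ with principal ideals.

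\step{Step 1: $PA_P=P$.} The inclusion $P\subseteq PA_P$ is clear. Conversely, an element of $PA_P$ has the form $p/s$ with $p\in P$ and $s\in A\setminus P$. By \eqref{eq:divided}, $P=sP$, so $p=sp'$ for some $p'\in P$, and hence $p/s=p'\in P$. Thus $PA_P=P$ as subsets of $\Frac(A)$.

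\step{Step 2: $\pi^{-1}(B)=A$.} Here $\pi:A_P\to A_P/PA_P=\Frac(A/P)$ restricts on $A$ to the quotient map $A\to B$, so $A\subseteq\pi^{-1}(B)$. Conversely, take $x=a/s\in A_P$ with $\pi(x)\in B$. Then there is $c\in A$ with $\pi(x)=\bar c$, so $x-c\in\ker\pi=PA_P$. By Step~1, $PA_P=P\subseteq A$, hence $x\in c+P\subseteq A$. This is the only step that really uses the divided hypothesis, and it is the main point to check: one must make sure that the identification $A_P/PA_P\cong\Frac(B)$ is compatible with $\pi$. That compatibility is standard, since localization commutes with quotients.

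\step{Step 3: the lifted ideal.} Let $J=(\bar a_1,\dots,\bar a_r)\ne0$. Then some $\bar a_i\ne0$, that is, $a_i\notin P$. By \eqref{eq:divided}, $P=a_iP\subseteq a_iA\subseteq I$. Since $P\subseteq I$, the image of $I$ in $B$ is $I/P$, and this image is generated by the $\bar a_j$. Hence $J=I/P$.
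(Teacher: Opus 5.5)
Your proof is correct and follows the paper's argument essentially step for step: $p/s\in P$ via $sP=P$, then $x-c\in PA_P=P\subseteq A$ for a lift $c$, then $P=a_jP\subseteq I$ for a generator $a_j\notin P$. One small correction: your aside that Step~2 is the only step using dividedness is inaccurate, since Steps~1 and~3 invoke \eqref{eq:divided} directly and Step~2 uses it only through Step~1.
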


\begin{proof}
For $p\in P$ and $s\notin P$, the identity $sP=P$ gives
$p/s\in P$, proving $PA_P=P$. Localizing $A/P$ at all its nonzero
elements identifies $A_P/PA_P$ with $\Frac(B)$.
If $x\in A_P$ and $\pi(x)=\bar a\in B$, choose $a\in A$ lifting
$\bar a$. Then $x-a\in PA_P=P\subseteq A$, so $x\in A$.
This proves the preimage identity. Finally, some $a_j\notin P$,
and $P=a_jP\subseteq I$, yielding the last assertion.
\end{proof}

The preimage identity transfers separation by fractional multipliers
through the quotient map.

\begin{theorem}\label{thm:quotient}
Let $A$ be a domain and let $P$ be a divided prime.
\begin{enumerate}
\item If $A$ is FGV, then $A/P$ is FGV.
\item If $A$ is coherent, then $A/P$ is coherent.
\item If $A$ is $G$-Pr\"ufer, then $A/P$ and every localization
of $A/P$ are $G$-Pr\"ufer.
\end{enumerate}
\end{theorem}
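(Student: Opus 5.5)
For (i), I will take a nonzero finitely generated ideal $J=(\bar a_1,\ldots,\bar a_r)$ of $B=A/P$ and an element $\bar z\in B\setminus J$, and separate $\bar z$ from $J$ by a fractional multiplier. By Lemma~\ref{lem:pullback}, the lift $I=(a_1,\ldots,a_r)A$ contains $P$ and satisfies $I/P=J$. Any lift $z$ of $\bar z$ therefore lies outside $I$. Since $A$ is FGV, there is $q\in\Frac(A)$ with $qI\subseteq A$ and $qz\notin A$. The key step is to show that $q\in A_P$. Some $a_j\notin P$, so $qa_j\in A$ and $q=(qa_j)/a_j\in A_P$. Put $\bar q=\pi(q)\in\Frac(B)$. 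Then $\bar q\bar a_i=\pi(qa_i)\in B$, so $\bar qJ\subseteq B$. If $\bar q\bar z\in B$, the preimage identity $\pi^{-1}(B)=A$ in \eqref{eq:pullback} would force $qz\in A$, which is a contradiction. Hence $\bar z\notin J_v$, and this gives $J_v=J$ exactly as in Proposition~\ref{prop:ultra-fgv}.

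For (ii), I will use the finite-row criterion. Take a row $\bar{\boldsymbol a}=(\bar a_1,\ldots,\bar a_r)$ over $B$, with some $\bar a_j\ne0$ (the zero row is trivial). I need to show that $\Rel_B(\bar{\boldsymbol a})$ is finitely generated. Suppose $\sum\bar b_i\bar a_i=0$ in $B$. Then $\sum b_ia_i=p\in P$. Since $a_j\notin P$, the divided property gives $P=a_jP$, so $p=a_jp'$ with $p'\in P$. Replacing $b_j$ by $b_j-p'$ produces an honest relation over $A$ with the same image in $B^r$. Thus $\Rel_B(\bar{\boldsymbol a})$ is the image of $\Rel_A(\boldsymbol a)$ under reduction. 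The latter is finitely generated by coherence of $A$, so the former is finitely generated. This requires reducing an arbitrary finitely generated ideal of $B$ to the case of a row, which is exactly what the finite-row criterion already does.

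For (iii), $A$ is coherent and FGV by Theorem~\ref{thm:criterion}. Parts (i) and (ii) then make $A/P$ coherent and FGV, hence $G$-Pr\"ufer by Theorem~\ref{thm:criterion}. Lemma~\ref{lem:inverse} passes this to every localization. The main point to check carefully is the step in (i) where the multiplier $q$ lands in $A_P$, so that $\pi$ can be applied. Everything hinges on choosing a generator outside $P$, and that choice is guaranteed because $J\ne0$.
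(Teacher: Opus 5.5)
Your proposal is correct and follows the paper's proof essentially step for step. It uses the same three arguments: in (i), a separating multiplier $q$ placed in $A_P$ via a generator $a_j\notin P$, combined with the preimage identity $\pi^{-1}(B)=A$; in (ii), the relation-lifting correction $p=a_jp'$ with $p'\in P$, which makes $\Rel_A(\boldsymbol a)\to\Rel_B(\bar{\boldsymbol a})$ surjective; and in (iii), the combination of Theorem~\ref{thm:criterion} with Lemma~\ref{lem:inverse}.
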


\begin{proof}
Set $B=A/P$. For (i), take a nonzero finitely generated ideal
$J=(\bar a_1,\ldots,\bar a_r)$ of $B$, and lift it to
$I=(a_1,\ldots,a_r)A$. Choose $a_j\notin P$.
By Lemma~\ref{lem:pullback}, $P\subseteq I$ and $J=I/P$.
Given $\bar z\in B\setminus J$, a lift $z\in A$ lies outside $I$.
As $I$ is divisorial, there exists $q\in(A:I)$ with $qz\notin A$.
Since $qa_j\in A$ and $a_j\notin P$, one has $q\in A_P$.
The element $\pi(q)\in\Frac(B)$ maps $J$ into $B$.
If $\pi(q)\bar z$ belonged to $B$, then $qz$ would belong to
$\pi^{-1}(B)=A$, a contradiction. Thus $\bar z\notin J_v$.
As before, $J_v\subseteq B$, so $J$ is divisorial.

For (ii), take a nonzero row $(\bar a_1,\ldots,\bar a_r)$ over
$B$ and a lift with $a_j\notin P$. A lifted relation
$(b_1,\ldots,b_r)$ has error $p=\sum_i a_ib_i\in P$.
Write $p=a_jc$ with $c\in P$. Replacing $b_j$ by $b_j-c$ gives
an exact relation over $A$ with the same reduction. Therefore
\[
 \Rel_A(a_1,\ldots,a_r)\longrightarrow
 \Rel_B(\bar a_1,\ldots,\bar a_r)
\]
is surjective. Coherence of $A$ makes the target finitely generated.
The zero row has relation module $B^r$, so the finite-row criterion
proves coherence of $B$. Notice that no finite generation of $P$
has been assumed.

The relation map also preserves any uniform bound for nonzero rows;
for a zero row of length $r$, the relation module is $B^r$.
Part (iii) follows from (i), (ii), Theorem~\ref{thm:criterion},
and Lemma~\ref{lem:inverse}.
\end{proof}

The coherence argument in part (ii) abstracts the relation-lifting
argument in \cite[Proposition~2.3]{TVN}. The separation argument in (i) additionally transfers FGV.

\subsection{Uniform matrix factorizations}\label{sec:periodic}

A complex is \emph{two-periodic} if its modules and differentials repeat
after a shift by two. We assert the existence of such a complex, not
minimality or least period two. A totally acyclic complex of finitely
generated free modules is an exact complex of finitely
generated free modules whose ring dual is exact. Each cycle is both
Gorenstein projective and Gorenstein flat. Indeed, for an injective
module $E$, finite free duality identifies its tensor product with the
complex with the result of applying $\Hom(-,E)$ to the exact dual.
For an arbitrary projective target, Hom-exactness follows by taking
sums of the finite free dual complex and then direct summands.

The matrix factorizations below are chosen with a size depending only
on the length of the prescribed generating row. This uniformity permits
the exact complexes and their duals to pass to an ultraproduct.

\begin{theorem}\label{thm:uniform-periodic}
Let $\Lambda$ be a nonempty set and let
$(B_\lambda)_{\lambda\in\Lambda}$ be domains of the form
$B_\lambda=k_\lambda[[X,Y]]/(f_\lambda)$, where $k_\lambda$ is a field
and $f_\lambda$ is a nonzero nonunit. Let $\U$ be an ultrafilter and put
$A=\prod_\U B_\lambda$. Every nonzero ideal $I$ of $A$ generated by
$r\ge1$ elements admits a two-periodic totally acyclic complex with
all terms $A^r$ and with $I$ as a cycle. In particular, $A$ is coherent
and $G$-Pr\"ufer.
\end{theorem}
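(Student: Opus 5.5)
We work coordinatewise, build for each $\lambda$ an $r\times r$ matrix factorization over $B_\lambda$ whose first column is the prescribed generating row, and then assemble these into a factorization over $A$. Everything will be uniform in $\lambda$ because the sizes are fixed at $r$.

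\emph{Step 1: a uniform factorization in each coordinate.} Write $I=(a_1,\dots,a_r)A$ with $a_i=[a_{i,\lambda}]$. Let $E\in\U$ be the set of indices with $I_\lambda=(a_{1,\lambda},\dots,a_{r,\lambda})\ne0$. Put $R_\lambda=k_\lambda[[X,Y]]$. Since $B_\lambda$ is a one-dimensional local domain, $R_\lambda$ is a two-dimensional regular local ring and $f_\lambda$ is prime.

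For $\lambda\in E$, lift the generators to $\tilde a_{i}\in R_\lambda$ and consider $J=(\tilde a_1,\dots,\tilde a_r,f_\lambda)R_\lambda$. Then $M=I_\lambda\cong J/(f_\lambda)$ is a torsion-free $B_\lambda$-module. Because $B_\lambda$ is one-dimensional Cohen--Macaulay, $M$ is maximal Cohen--Macaulay, and so by Auslander--Buchsbaum $\operatorname{pd}_{R_\lambda}M=1$. I would try to show directly that $M$ has an $R_\lambda$-presentation $0\to R_\lambda^r\xrightarrow{\varphi_\lambda}R_\lambda^r\to M\to0$ using exactly the $r$ given generators. The generators surject $R_\lambda^r\to M$; the kernel $K$ is a second syzygy-type module over a regular ring of dimension two, hence free, and its rank equals $\operatorname{rank}_{R_\lambda}R^r_\lambda=r$ because $M$ is torsion over $R_\lambda$. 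So $\varphi_\lambda$ is $r\times r$ (non-minimal generating sets are allowed, which is why no minimality is claimed). Since $f_\lambda M=0$, we have $f_\lambda R^r\subseteq\operatorname{im}\varphi_\lambda$, which gives $\psi_\lambda$ with $\varphi_\lambda\psi_\lambda=\psi_\lambda\varphi_\lambda=f_\lambda\cdot1$. Eisenbud's theorem then yields the exact two-periodic complex
\[
\cdots\to B_\lambda^r\xrightarrow{\bar\varphi_\lambda}B_\lambda^r\xrightarrow{\bar\psi_\lambda}B_\lambda^r\xrightarrow{\bar\varphi_\lambda}\cdots,
\]
with cokernel of $\bar\varphi_\lambda$ equal to $M$. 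Its dual is the factorization $(\psi^T,\varphi^T)$, which is again exact. Since $\operatorname{coker}\bar\varphi_\lambda\cong\operatorname{im}\bar\psi_\lambda$, I would also arrange that $I_\lambda$ itself is the image of the row map $(a_{1,\lambda},\dots,a_{r,\lambda})$ sitting in the complex. This amounts to splicing with the surjection $B_\lambda^r\to I_\lambda\subseteq B_\lambda$: the row is then a differential followed by the inclusion into one free summand. The cleanest route is to take $I_\lambda\cong\operatorname{coker}$ and note that a cycle is only needed up to isomorphism.

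\emph{Step 2: ultraproduct.} Put $\Phi=[\bar\varphi_\lambda]$ and $\Psi=[\bar\psi_\lambda]$, which are $r\times r$ matrices over $A$, with zero matrices off $E$. Then $\Phi\Psi=\Psi\Phi=0$. Exactness, $\ker\Phi=\operatorname{im}\Psi$, is checked by the argument of Lemma~\ref{lem:ultraproduct-ideal-membership} applied to submodules of $A^r$ generated by $r$ columns. A vector killed by $\Phi$ is killed coordinatewise almost everywhere, hence lies in $\operatorname{im}\bar\psi_\lambda$ almost everywhere, and the coefficients assemble into an element of $A^r$. The transposed complex is handled the same way. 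The cokernel of $\Phi$ is the ultraproduct of the $I_\lambda$, identified with $I$ via the generators, again by membership.

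\emph{Step 3: consequences.} Relation modules of $r$ elements are cycles of this complex, hence generated by $r$ elements. This gives coherence, and $A$ is FGV by Proposition~\ref{prop:ultra-fgv}; Lemma~\ref{lem:uniform} then applies. The main obstacle I expect is Step 1: guaranteeing a square $r\times r$ factorization adapted to the given (possibly non-minimal) generating row, together with making $I$ literally a cycle rather than merely a cokernel. The rank-count argument via Auslander--Buchsbaum over the regular ring $R_\lambda$ is what I would try first.
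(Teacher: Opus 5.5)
Your proposal is correct and follows essentially the paper's proof: Auslander--Buchsbaum over $k_\lambda[[X,Y]]$ makes the kernel of the prescribed surjection free of rank $r$, which gives an $r\times r$ matrix factorization; its exactness and dual exactness pass coordinatewise to the ultraproduct, and $I\cong\operatorname{coker}\Phi$ is a cycle up to isomorphism. The only divergence is the final step: you deduce the $G$-Pr\"ufer property from coherence plus Proposition~\ref{prop:ultra-fgv}, which needs the unstated but standard fact that each one-dimensional Gorenstein local domain $B_\lambda$ is FGV, whereas the paper reads it off from Gorenstein flatness of the cycles.
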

\begin{proof}
\step{Step 1: matrix factorizations of the prescribed size.}Write $I=(a_1,\ldots,a_r)A$ and choose coordinate rows
$(a_{1,\lambda},\ldots,a_{r,\lambda})$ with image $I_\lambda$.
Let $S_\lambda=k_\lambda[[X,Y]]$. If $I_\lambda\ne0$, then
$I_\lambda$ is a maximal Cohen--Macaulay module of dimension one over
$B_\lambda$. Viewed over $S_\lambda$, it has depth one: it is nonzero torsion-free over the one-dimensional
local domain $B_\lambda$. The regular local ring $S_\lambda$ has global
dimension two, so the Auslander--Buchsbaum formula gives projective
dimension one
\cite[Tag~090U]{Stacks}. Hence the kernel of the prescribed
surjection $S_\lambda^r\to I_\lambda$ is a free module. Its rank is
$r$, since $f_\lambda I_\lambda=0$. Choosing a basis gives
\[
 0\longrightarrow S_\lambda^r
 \xrightarrow{\Phi_\lambda}S_\lambda^r
 \longrightarrow I_\lambda\longrightarrow0.
\]
As $f_\lambda S_\lambda^r\subseteq\im\Phi_\lambda$, there is
$\Psi_\lambda$ with
\[
 \Phi_\lambda\Psi_\lambda
 =\Psi_\lambda\Phi_\lambda=f_\lambda I_r.
\]
The second equality follows from injectivity of $\Phi_\lambda$.
For a zero ideal use $\Phi_\lambda=I_r$ and
$\Psi_\lambda=f_\lambda I_r$.

\step{Step 2: exactness and dual exactness.}Reducing modulo $f_\lambda$ gives matrices
$\varphi_\lambda,\psi_\lambda$ over $B_\lambda$ and an exact
alternating complex with cokernel $I_\lambda$. For example,
$\Phi_\lambda v=f_\lambda w$ implies
$\Phi_\lambda(v-\Psi_\lambda w)=0$, proving
$\ker\varphi_\lambda=\im\psi_\lambda$; the other equality is
similar. Both lifted matrices are injective, and the same argument
applies to their transposes. This is the matrix-factorization
construction of \cite{Eisenbud}.

\step{Step 3: ultraproducts and coherence.}Take the ultraproduct of the entries of the fixed-size matrices.
If a vector lies in a kernel over $A$, its representative lies in the
corresponding coordinate kernel on a set in $\U$. Choose preimages
coordinatewise on that set, and put zero entries elsewhere. This
proves exactness of the alternating complex over $A$. The same
argument proves exactness of its dual and of its augmentation onto
$I$. Thus $\operatorname{coker}\varphi\cong I$, and the alternating
complex has a cycle isomorphic to $I$.

In particular, the kernel of $A^r\to I$ is generated by at most $r$
columns. The zero row also has a finitely generated kernel.
Thus $A$ is coherent. Every finitely generated ideal is Gorenstein
flat; equivalently, by the criterion in \cite[Corollary~2.4]{HLZ},
$A$ is $G$-Pr\"ufer.
\end{proof}

The period in this statement divides two; neither minimality of the
complex nor least period two is asserted. Its free ranks are tied to
the prescribed generating row rather than to a minimal resolution.

\subsection{Preserving complete complexes through a quotient}

A flat prime permits reduction of both a complete complex and its
dual, while the divided-prime condition identifies the reduced ideal.

\begin{lemma}\label{lem:flat-quotient-complex}
Let $A$ be a domain and $P$ a flat prime ideal. Put $B=A/P$.
If $\boldsymbol F$ is a totally acyclic complex of finitely generated
free $A$-modules, then $B\otimes_A\boldsymbol F$ is a totally acyclic
complex of finitely generated free $B$-modules. Its cycles are the
modules $B\otimes_A Z$ for the corresponding cycles $Z$ of
$\boldsymbol F$. Periodicity and the ranks of the terms are preserved.
\end{lemma}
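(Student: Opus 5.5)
The plan is to use flatness of $B=A/P$'s defining ideal to control all the relevant Tor groups, and to handle the dual complex by identifying it with the reduction of the $A$-dual. Write $\boldsymbol F$ as $\cdots\to F_{n+1}\xrightarrow{\partial_{n+1}}F_n\xrightarrow{\partial_n}F_{n-1}\to\cdots$ with each $F_n\cong A^{r_n}$ and cycles $Z_n=\ker\partial_n$. First, $B\otimes_A F_n\cong B^{r_n}$, so terms stay finitely generated free of the same rank. Any periodicity $F_{n+2}=F_n$, $\partial_{n+2}=\partial_n$ is carried along because $B\otimes_A-$ is a functor. For exactness, since $P$ is flat, the sequence $0\to P\to A\to B\to0$ shows $\fd_AB\le1$, so $\Tor_i^A(B,-)=0$ for $i\ge2$ and $\Tor_1^A(B,M)=\ker(P\otimes_AM\to M)$.

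Next I would show $\Tor_1^A(B,Z_n)=0$ for every cycle. Since $\boldsymbol F$ is exact with free terms, each $Z_n$ has an infinite free resolution to the right: $0\to Z_n\to F_n\to F_{n-1}\to\cdots$. Hence $Z_n$ is an arbitrarily high syzygy, and dimension shifting gives $\Tor_1^A(B,Z_n)\cong\Tor_{1+k}^A(B,Z_{n-k})$ for all $k\ge0$. Taking $k=1$ yields $\Tor_2^A(B,Z_{n-1})=0$. Therefore tensoring the short exact sequences $0\to Z_{n}\to F_n\to Z_{n-1}\to0$ with $B$ keeps them exact. Splicing these gives exactness of $B\otimes_A\boldsymbol F$ and identifies its cycles with $B\otimes_AZ_n$.

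For the dual, I would use that for finitely generated free modules $\Hom_B(B\otimes_AF,B)\cong B\otimes_A\Hom_A(F,A)$ naturally. This identifies $\Hom_B(B\otimes_A\boldsymbol F,B)$ with $B\otimes_A\boldsymbol F^*$, where $\boldsymbol F^*=\Hom_A(\boldsymbol F,A)$. By total acyclicity, $\boldsymbol F^*$ is itself an exact complex of finitely generated free $A$-modules, so the previous paragraph applies to it verbatim. Thus the dual over $B$ is exact and $B\otimes_A\boldsymbol F$ is totally acyclic.

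The main obstacle is the step $\Tor_1^A(B,Z_n)=0$; the rest is formal. It hinges on $\fd_AB\le1$, which comes from flatness of $P$, combined with the fact that cycles in an exact free complex are arbitrarily high syzygies. I would check carefully that the dimension shift uses only exactness of $\boldsymbol F$ and not finiteness of the $Z_n$.
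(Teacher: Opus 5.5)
Your proof is correct, but you obtain the key Tor vanishing by a different route from the paper's.

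\textbf{The paper's route.} It proves a statement about modules rather than complexes. For every torsion-free $A$-module $M$, the multiplication map $P\otimes_A M\to M$ is injective. To see this, it embeds $M$ in $K\otimes_A M$, tensors with the flat module $P$, and uses $P\otimes_A K\cong K$ when $P\ne0$. This is exactly the vanishing of the kernel you identify as $\Tor_1^A(B,M)$, so $\Tor_i^A(B,M)=0$ for all $i>0$. The paper then applies this to the cycles of $\boldsymbol F$ and $\boldsymbol F^*$, which are torsion-free because they are submodules of free modules.

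\textbf{Your route.} You use only $\fd_A B\le1$ and the fact that each cycle $Z_n$ is a first syzygy of $Z_{n-1}$, so $\Tor_1^A(B,Z_n)\cong\Tor_2^A(B,Z_{n-1})=0$.

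\textbf{What each buys.} Your argument never uses that $A$ is a domain, that $P$ is prime, or that the cycles are torsion-free. It works for any flat ideal of any commutative ring, and the exactness part works for any module of finite flat dimension in place of $B$. The paper's argument instead yields the stronger intermediate fact that $B$ is Tor-independent of every torsion-free $A$-module, whether or not that module is a cycle of an exact complex. The price is using the domain hypothesis and treating $P=0$ separately.

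The handling of the dual, via $\Hom_B(B\otimes_A\boldsymbol F,B)\cong B\otimes_A\Hom_A(\boldsymbol F,A)$, and of ranks and periodicity is the same in both.
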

\begin{proof}
The case $P=0$ is immediate. For a torsion-free $A$-module $M$, the
multiplication map $P\otimes_A M\to M$ is injective. Indeed,
$M\hookrightarrow K\otimes_A M$, with $K=\Frac(A)$, remains injective
after tensoring with the flat module $P$, and
$P\otimes_A K\cong K$. Consequently the flat resolution
$0\to P\to A\to B\to0$ gives
$\Tor_i^A(B,M)=0$ for every $i>0$.

All cycles of $\boldsymbol F$ and $\boldsymbol F^*$ are torsion-free,
since they are submodules of free modules. Tensor their short exact
cycle sequences with $B$. The Tor vanishing gives exactness and the
asserted description of cycles. Degreewise finite free duality gives
\[
 \Hom_B(B\otimes_A\boldsymbol F,B)
 \cong B\otimes_A\Hom_A(\boldsymbol F,A),
\]
which is exact by the same argument. Periodicity and ranks are
unchanged by this scalar extension.
\end{proof}

Dividedness supplies the additional ideal identity needed to identify
the reduced cycle with an arbitrary ideal of the quotient.

\begin{corollary}\label{cor:periodic-divided}
Let $A$ be a domain, let $P$ be a flat divided prime, and fix an integer
$r\ge1$. Suppose every nonzero $r$-generated ideal of $A$ has a two-periodic totally acyclic complex
with terms $A^r$. Then every nonzero $r$-generated ideal of $A/P$, and
of every localization of $A/P$, has a two-periodic totally acyclic
complex with free terms of rank $r$.
\end{corollary}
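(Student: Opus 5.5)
Put $B=A/P$. Let $J=(\bar a_1,\ldots,\bar a_r)$ be a nonzero ideal of $B$, lift the generators to $a_1,\ldots,a_r\in A$, and set $I=(a_1,\ldots,a_r)A$. Some $a_j\notin P$, so $I\ne0$. By hypothesis $I$ is a cycle of a two-periodic totally acyclic complex $\boldsymbol F$ with terms $A^r$. Lemma~\ref{lem:flat-quotient-complex} then shows that $B\otimes_A\boldsymbol F$ is a two-periodic totally acyclic complex with terms $B^r$. The cycle corresponding to $I$ is $B\otimes_A I=I/PI$. It remains to identify $I/PI$ with $J$.

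By Lemma~\ref{lem:pullback}, $P\subseteq I$ and $J=I/P$. So we need $PI=P$, or at least that the kernel of $I/PI\to I/P$ vanishes. Since $P$ is divided and $a_j\notin P$, we have $P=a_jP\subseteq PI\subseteq P$. Hence $PI=P$ and $B\otimes_AI\cong I/P=J$.

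One point needs care: the cycle description in Lemma~\ref{lem:flat-quotient-complex} identifies cycles of the reduced complex with $B\otimes_AZ$, not with $Z/PZ$ regarded as a submodule. Since $B\otimes_A I\cong I/PI$ canonically, this is fine. The isomorphism $I/PI\to J$ is the natural surjection $I\to I/P$, and it is injective because $PI=P$. Thus $J$ is a cycle, up to isomorphism, of a two-periodic totally acyclic complex with free terms of rank $r$. This settles the case of $A/P$ itself.

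For a localization $B_S$, with $S\subseteq B\setminus\{0\}$ multiplicatively closed, take a nonzero $r$-generated ideal $J'$ of $B_S$. Clearing denominators from the generators (multiplying each by a unit of $B_S$) gives $J'=JB_S$, where $J$ is a nonzero $r$-generated ideal of $B$. Let $\boldsymbol G$ be the complex obtained above for $J$ over $B$. Localization is exact, so $B_S\otimes_B\boldsymbol G$ is exact, with terms $B_S^r$ and with $J_S=J'$ as a cycle. Its $B_S$-dual is $(\boldsymbol G^*)_S$ by finite free duality, and this is exact as well. Periodicity is preserved throughout.

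The main obstacle is the identification $B\otimes_AI\cong J$. This is exactly where dividedness, and not merely flatness, is used: it gives $PI=P$. Everything else is formal from Lemma~\ref{lem:flat-quotient-complex} and exactness of localization.
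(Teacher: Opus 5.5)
Your proof is correct and follows the paper's argument: you lift the generators, use dividedness to get $PI=P$ so that $B\otimes_A I\cong I/P=J$, apply Lemma~\ref{lem:flat-quotient-complex}, and then localize the resulting complex after clearing denominators. Your localization step multiplies each generator by its own unit denominator, giving $J'=JB_S$ exactly rather than only up to isomorphism; this is a slightly more explicit form of the paper's common-denominator remark.
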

\begin{proof}
Lift a nonzero ideal $J=(\bar a_1,\ldots,\bar a_r)$ of $A/P$ to
$I=(a_1,\ldots,a_r)A$, with some $a_j\notin P$. Dividedness gives
$P=a_jP\subseteq PI\subseteq P$, hence $PI=P$. Therefore
$B\otimes_A I=I/P=J$. Apply Lemma~\ref{lem:flat-quotient-complex}.
For a localization, clear a common denominator from the prescribed
$r$ generators and localize the complex; multiplying by that
invertible denominator preserves the isomorphism class of the ideal.
\end{proof}

\section{A one-dimensional local domain with periodic ideal resolutions}\label{sec:construction}

We now apply the permanence results to a specified order-cut
construction. The prime ideals, dimension, and polynomial fiber are
recalled with their original attribution. The additional work here
proves flatness of the divided prime and transfers the uniform
complete resolutions.

\subsection{The construction and the cited input}

We now specialize the preceding ultraproduct constructions to the
plane-curve domains used in the order-cut construction of
\cite[Section~2]{TVN}. Fix a nonprincipal ultrafilter $\U$ on
$\{2,3,\ldots\}$ and set
\begin{equation}\label{eq:factors}
 A_n=\mathbb Q[[t^n,t^{n+1}]],
 \qquad
 A=\prod_{\U}A_n.
\end{equation}
Throughout this construction, statements holding for
$\U$-almost all $n$ are understood with respect to this fixed
ultrafilter.

Let $\nu_n$ be the $t$-adic order, with $\nu_n(0)=\infty$, and define
\begin{align}
 P&=\{[a_n]:(\forall c\in\mathbb Z_{\ge1})\,
           \{n:\nu_n(a_n)>cn^2\}\in\U\},\label{eq:P}\\
 Q&=\{[a_n]:(\exists\epsilon>0)\,
           \{n:\nu_n(a_n)>\epsilon n^2\}\in\U\},\label{eq:Q}\\
 D&=(A/P)_{Q/P},\qquad \m=(Q/P)D.\label{eq:D}
\end{align}
The bounds associated with different constants may hold on different
members of $\U$.

For the obstruction, set
\[
 \alpha=\sqrt2-1,\qquad d_n=\lfloor\alpha n\rfloor,\qquad
 u=[t^{n^2}],\qquad v=[t^{n^2+d_n}].
\]
Use $\bar u,\bar v$ for their images in $D$ and set $z=\bar v/\bar u$.
The following inputs, including this specified fraction, are due to
Tran, Vo, and Nguyen.

\begin{theorem}[Tran--Vo--Nguyen]\label{thm:TVN}
Fix a nonprincipal ultrafilter $\mathcal U$ on $\{2,3,\ldots\}$ and
form $A$, $P$, $Q$, $D$, $\mathfrak m$, $\bar u$, $\bar v$, and $z$
by \eqref{eq:factors}--\eqref{eq:D} and the displayed formulas above.
Then $P\subsetneq Q$ are prime and
$aP=P$ for every $a\in A\setminus P$. The domain $D$ is local of
dimension one, and $0\ne\bar u,\bar v\in\m$. Moreover,
\begin{equation}\label{eq:polynomial-fiber}
 D[z]/\m D[z]\cong (D/\m)[Z],
\end{equation}
where $Z$ maps to the residue of $z$.
\end{theorem}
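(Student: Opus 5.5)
The plan is to work throughout with the normalized order function on the ultraproduct. Each $A_n=\mathbb Q[[t^n,t^{n+1}]]$ sits inside $\mathbb Q[[t]]$, and $\nu_n$ is a valuation there, so $\nu_n(ab)=\nu_n(a)+\nu_n(b)$. The numerical semigroup $\langle n,n+1\rangle$ has conductor $n(n-1)<n^2$. Consequently, every power series of $t$-adic order at least $n^2$ already lies in $A_n$. I will use this \emph{conductor fact} repeatedly; everything in the statement except the polynomial fiber follows from it together with additivity of $\nu_n$.

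\textbf{Primality and properness.} Use additivity and the ultrafilter dichotomy. If $[a_nb_n]\in P$ but $[a_n]\notin P$, then some $c$ satisfies $\nu_n(a_n)\le cn^2$ on a member of $\U$. Then for every $c'$ one gets $\nu_n(b_n)>(c'-c)n^2$ almost everywhere, so $[b_n]\in P$. The argument for $Q$ is identical, with $\epsilon$ replaced by $\epsilon/2$. Closure under sums uses $\nu_n(a+b)\ge\min$, intersecting the two members of $\U$. The element $u=[t^{n^2}]$ lies in $Q\setminus P$, and so does $v$, since $d_n<n$. This gives $P\subsetneq Q$ and $\bar u,\bar v\ne0$ in $\m$.

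\textbf{Dividedness.} Let $a\notin P$ and $p\in P$, and fix $c$ with $\nu_n(a_n)\le cn^2$ on a member of $\U$. The quotient $p_n/a_n\in\mathbb Q((t))$ has order $\nu_n(p_n)-\nu_n(a_n)>c'n^2$ for every $c'$ almost everywhere. Taking $c'\ge1$, the conductor fact puts $p_n/a_n\in A_n$, and the class $[p_n/a_n]$ again lies in $P$. Hence $P\subseteq aP$, and the reverse inclusion is trivial.

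\textbf{Local of dimension one.} $D$ is local by construction. For dimension one it suffices to show that every prime $P'$ with $P\subsetneq P'\subseteq Q$ equals $Q$. Take $x\in P'\setminus P$ and $y\in Q$. Choose $\epsilon$ with $\nu_n(y_n)>\epsilon n^2$ and $c$ with $\nu_n(x_n)\le cn^2$, both almost everywhere, and pick $k$ with $k\epsilon>c+1$. Then $y^k/x$ has order exceeding $n^2$ almost everywhere, so $y^k\in xA\subseteq P'$ by the conductor fact. Hence $y\in P'$. Primes of $D$ correspond to primes of $A$ between $P$ and $Q$, which gives $\dim D=1$.

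\textbf{Polynomial fiber.} This is the main obstacle. By the universal property, $(D/\m)[Z]\to D[z]/\m D[z]$ is surjective, so it must be shown injective. Suppose $\sum_{i=0}^k c_iz^i\in\m D[z]$ with $c_i\in D$. I would reduce to the following statement: whenever
\[
\sum_i c_i z^i=\sum_i m_i z^i,\qquad m_i\in\m,
\]
with all $c_i-m_i$ units or zero, then all $c_i-m_i=0$. To attack this, clear denominators by $\bar u^k$, which turns the relation into an identity among elements $[w_n t^{(k-i)n^2+id_n}]$ of $A/P$.

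Units of $D$ have order $o(n^2)$, whereas elements of $\m$ have order at least $\epsilon n^2$. Comparing lowest-order terms coordinatewise, the exponents $(k-i)n^2+id_n+o(n^2)$ for distinct $i$ are separated; this is only a first heuristic. The real content is that $z$ is not integral over $D$ modulo $\m$: an identity such as $z^j\in\m D[z]+\sum_{i<j}Dz^i$ would force, coordinatewise, membership of $t^{jd_n+\text{small}}$ in ideals of $A_n$ built from gaps of the semigroup $\langle n,n+1\rangle$.

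Here I would use the structure of the semigroup: an integer $e$ lies in $\langle n,n+1\rangle$ iff $e\bmod n\le\lfloor e/n\rfloor$. I would then use the irrationality of $\alpha=\sqrt2-1$ to show that the residues $id_n \bmod n$ for $1\le i\le k$ avoid the semigroup pattern on a member of $\U$. The key input is that the fractional parts of $i\alpha$ are all distinct and nonzero. I expect this Diophantine bookkeeping, which makes the cancellation argument uniform in $k$, to be the delicate step. I would first try it for $k=1$, that is, showing $z\notin D+\m D[z]$, and then induct on the degree.
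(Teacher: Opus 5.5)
The paper does not prove this theorem; it quotes Proposition~2.3 and Lemma~3.1 of Tran--Vo--Nguyen. Your arguments for primality, $P\subsetneq Q$, dividedness, locality and $\dim D=1$ are correct and complete. Additivity of $\nu_n$ together with the conductor bound $n(n-1)<n^2$ is exactly what those parts need.

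\textbf{The gap is the polynomial fiber.} You leave it as an expected ``delicate step,'' and the heuristic you give for it rests on a miscalculation. Multiplying by $u^k$ turns $e_iz^i$ into $e_iu^{k-i}v^i$. Its order is $kn^2+id_n+\nu_n(e_{i,n})$, not $(k-i)n^2+id_n+\cdots$. So all terms live at the same $n^2$-scale. A unit coefficient contributes an uncontrolled $o(n^2)$, while $(i-j)d_n=O(n)$, so the exponents are not separated by size at all. Two further points:
\begin{itemize}
\item Your first reduction, which demands that every $c_i-m_i$ be a unit or zero, is insufficient: it ignores relations that mix unit coefficients with nonzero elements of $\mathfrak m$.
\item Your later reformulation as non-integrality of $z$ modulo $\mathfrak m$, including $j=0$, is the correct equivalent, because $D/\mathfrak m$ is a field.
\end{itemize}
An induction on degree is unnecessary. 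So is testing which residues $id_n\bmod n$ fit the semigroup pattern.

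\textbf{The missing idea is a two-scale dominant-term comparison that handles every degree at once.} Start from $\sum_{i\le k}e_iz^i=0$ in $D$ with some $e_i$ a unit. Clear a denominator from $A\setminus Q$ and multiply by $u^k$. This gives $\sum_i a_iu^{k-i}v^i\in P$, where $a_i\notin Q$ exactly for $i$ in a nonempty set $I_0$. Now compare orders coordinatewise:
\begin{itemize}
\item terms with $i\notin I_0$ have order above $(k+\epsilon)n^2$;
\item terms with $i\in I_0$ have order at most $(k+\delta)n^2$ almost everywhere, for every $\delta>0$;
\item the sum has order above $(k+1)n^2$.
\end{itemize}
Hence, on a member of $\mathcal U$, a fixed pair $i\ne j$ in $I_0$ ties for the minimum: $(i-j)d_n=\nu_n(a_{j,n})-\nu_n(a_{i,n})$.

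Both orders on the right are at most $\delta n^2$ almost everywhere, for every $\delta$. By the semigroup description you quote, any element of $\langle n,n+1\rangle$ of that size has the form $qn+r$ with $0\le r\le q\le\delta n$. So the right side lies within $\delta n$ of $n\mathbb Z$. But $(i-j)d_n/n\to(i-j)\alpha\notin\mathbb Z$, a contradiction. This is exactly where the distinct nonzero fractional parts of $i\alpha$ enter, and only the irrationality of $\alpha$ is used.

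In valuation language: for the rank-one valuation $\lim_{\mathcal U}\nu_n/n^2$, the residue of $z$ has fine value $[d_n]$. No nonzero multiple of $[d_n]$ lies in the value group of $D/\mathfrak m$, so that residue is transcendental over $D/\mathfrak m$.
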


The prime and dimension assertions are
\cite[Proposition~2.3]{TVN}; the polynomial-fiber calculation is
\cite[Lemma~3.1]{TVN}. The numerical-semigroup conductor inclusion
$t^{n^2}\mathbb Q[[t]]\subseteq A_n$ used below is
\cite[Lemma~2.1]{TVN}.

\subsection{Flatness of the cut and complete resolutions over the quotient}

The order cut defining $P$ has an additional property that permits
complete resolutions to pass to $D$. The proof uses finitely many
coordinate inequalities at a time; it does not take a countable
intersection of members of the ultrafilter.

\begin{lemma}\label{lem:cut-flat}
Let $A=\prod_\mathcal U\mathbb Q[[t^n,t^{n+1}]]$ for a nonprincipal
ultrafilter $\mathcal U$ on $\{2,3,\ldots\}$, and let $P$ be given by
\eqref{eq:P}. Then $P$ is a flat idempotent ideal of $A$.
More precisely, every finite subset of $P$ is contained in a
principal ideal $cA$ with $c\in P$, and may be factored with its
quotients also in $P$.
\end{lemma}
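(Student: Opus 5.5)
The approach is to establish the "more precisely" clause first, by an explicit coordinatewise construction, and then derive idempotence and flatness from it by standard arguments.

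\emph{Step 1: the common divisor.} Let $p_1,\ldots,p_m\in P$, with representatives $p_i=[p_{i,n}]$. Put
\[
 e_n=\min_i\nu_n(p_{i,n})\in\mathbb Z_{\ge0}\cup\{\infty\}.
\]
Since $P$ is closed under finite sums and the defining condition \eqref{eq:P} involves finitely many constants at a time, for each $c\ge1$ the set $\{n:e_n>cn^2\}$ lies in $\U$. Choose the exponent
\[
 g_n=\max\bigl\{k\in\mathbb Z_{\ge1}: e_n>2k n^2\bigr\},
\]
setting $g_n=n$ when $e_n=\infty$, and $g_n=1$ when no such $k$ exists. Then $g_n\to\infty$ along $\U$: for each fixed $c$, the set $\{n: g_n\ge c\}$ contains $\{n:e_n>2cn^2\}\in\U$, using only one member of $\U$ per constant $c$. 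Define
\[
 c=[t^{g_nn^2}]\in A .
\]
This element lies in $A_n$, since $g_nn^2$ is a multiple of $n$. Moreover, $c\in P$, because $\nu_n=g_nn^2>c'n^2$ holds $\U$-almost everywhere for each fixed $c'$.

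\emph{Step 2: the quotients.} Set $q_{i,n}=p_{i,n}/t^{g_nn^2}\in\mathbb Q[[t]]$ on the $\U$-set where $e_n>2g_nn^2$, and $0$ elsewhere. Its order is at least $e_n-g_nn^2>g_nn^2\ge n^2$. By the conductor inclusion $t^{n^2}\mathbb Q[[t]]\subseteq A_n$ of \cite[Lemma~2.1]{TVN}, $q_{i,n}\in A_n$. Also $\nu_n(q_{i,n})>g_nn^2$ with $g_n\to\infty$ along $\U$, so $q_i=[q_{i,n}]\in P$ and $p_i=cq_i$.

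The main obstacle is bookkeeping: $g_n$ is a single sequence, so each constant uses only finitely many $\U$-sets. The argument therefore never requires intersecting countably many members of $\U$.

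\emph{Step 3: idempotence and flatness.} Every $p\in P$ factors as $cq$ with $c,q\in P$, so $P=P^2$. For flatness, note that $P$ is the directed union of the principal ideals $cA$ with $c\in P$. Any two of these, $c_1A$ and $c_2A$, lie in a third by Step 1 applied to $\{c_1,c_2\}$. Since $A$ is a domain, each $cA\cong A$ for $c\neq0$ is free. A directed union of flat submodules is a direct limit of flat modules, hence flat. If $P=0$, flatness is trivial. Therefore $P$ is flat.
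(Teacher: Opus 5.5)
Your argument is correct and is essentially the paper's proof: you factor a finite subset of $P$ through a monomial of roughly half the minimal order (the paper uses exponent $\lfloor s_n/2\rfloor$, you use $g_nn^2$), place the quotients in $A_n$ by the conductor inclusion, and obtain $P^2=P$ and flatness from the directed union of free principal ideals $cA$. One small slip: the inclusion $\{n:g_n\ge c\}\supseteq\{n:e_n>2cn^2\}$ fails at indices with $e_n=\infty$ and $n<c$, where $g_n=n<c$; intersecting with the cofinite set $\{n:n\ge c\}$, which lies in $\U$ by nonprincipality, repairs it.
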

\begin{proof}
Take $p_1,\ldots,p_r\in P$, not all zero, with representatives
$p_{i,n}$. On a member of $\U$, their minimum order
$s_n=\min_i\nu_n(p_{i,n})$ is finite and greater than $4n^2$.
There put $c_n=t^{\lfloor s_n/2\rfloor}$, and put $c_n=0$
outside that set. Both $c_n$ and every $p_{i,n}/c_n$ on this set
have order at least $n^2$, so they belong to $A_n$ by its conductor
inclusion. Define the quotient representatives to be zero elsewhere.

Since $s_n/n^2$ exceeds every fixed bound almost everywhere, both
$c=[c_n]$ and $q_i=[p_{i,n}/c_n]$ belong to $P$, and $p_i=cq_i$.
This proves the factorization assertion and $P^2=P$.
The principal subideals $cA\subseteq P$ with $c\ne0$ form a directed
family whose union is $P$. Each is a free rank-one $A$-module, so
$P$ is flat as their filtered colimit.
\end{proof}

We now obtain the Gorenstein property and the uniform resolutions.

\begin{theorem}\label{thm:main}
Let $\mathcal U$ be a nonprincipal ultrafilter on $\{2,3,\ldots\}$ and
let $D=(A/P)_{Q/P}$ be the domain defined in
\eqref{eq:factors}--\eqref{eq:D}. Then $D$ is a one-dimensional local
$G$-Pr\"ufer domain. For every integer $r\ge1$, every nonzero ideal of $D$ generated by $r$
elements admits a two-periodic complete free resolution with terms
$D^r$. Its integral closure $B=\overline D$ is not Pr\"ufer.
\end{theorem}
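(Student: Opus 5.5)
The plan is to assemble the theorem from the permanence results of Section~\ref{sec:transfer}, with the TVN input of Theorem~\ref{thm:TVN} supplying the structural facts.

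\emph{First}, each $A_n=\mathbb Q[[t^n,t^{n+1}]]$ has the form required in Theorem~\ref{thm:uniform-periodic}. Indeed, $A_n\cong\mathbb Q[[X,Y]]/(X^{n+1}-Y^n)$, and this is a domain because it embeds in $\mathbb Q[[t]]$. Hence Theorem~\ref{thm:uniform-periodic} gives the following for $A=\prod_\U A_n$. The ring $A$ is coherent and $G$-Pr\"ufer, and every nonzero $r$-generated ideal of $A$ is a cycle of a two-periodic totally acyclic complex with terms $A^r$.

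\emph{Second}, by Theorem~\ref{thm:TVN}, $P$ is a prime satisfying $aP=P$ for every $a\notin P$, so $P$ is divided. By Lemma~\ref{lem:cut-flat}, $P$ is flat. Theorem~\ref{thm:quotient}(iii) then shows that $A/P$ and its localization $D=(A/P)_{Q/P}$ are $G$-Pr\"ufer. Corollary~\ref{cor:periodic-divided} transfers the complexes to $D$, giving for each nonzero $r$-generated ideal a two-periodic totally acyclic complex with free terms of rank $r$, that is, terms $D^r$. Its cycles are Gorenstein flat, which independently confirms the $G$-Pr\"ufer property. That $D$ is local of dimension one is part of Theorem~\ref{thm:TVN}.

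\emph{Finally}, non-Pr\"uferness of $B=\overline D$ should come from the polynomial fiber \eqref{eq:polynomial-fiber}. Suppose $B$ were Pr\"ufer. Then for the nonzero elements $\bar u,\bar v$, the ideal $(\bar u,\bar v)B$ is invertible. Localizing at each maximal ideal $N$ of $B$, the ring $B_N$ is a valuation domain, so $z\in B_N$ or $z^{-1}\in B_N$. This is the step I expect to be the main obstacle: converting the valuation dichotomy into a contradiction with $D[z]/\m D[z]\cong(D/\m)[Z]$.

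My first attempt uses the standard fact that an integral extension of a Pr\"ufer ring detected by the Dedekind--Seidenberg/Gilmer criterion forces $z$ to satisfy a relation $(1+\m$-terms$)$. In the Pr\"ufer case, $z$ satisfies an identity in $D[z]$ of the form $c_0+c_1z+\dots+c_kz^k$ in which some coefficient is a unit. This is obtained as follows. By lying over, every maximal ideal $N$ of $B$ contracts to $\m$. If $z\in B_N$ for all $N$, then $z\in B$ is integral over $D$, so its monic equation reduces to a nonzero polynomial relation in the fiber, contradicting \eqref{eq:polynomial-fiber}.

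Otherwise $z^{-1}\in B_N$ for some $N$ with $z\notin B_N$, and then $z^{-1}\in NB_N$. Consider the ring $D[z]$ and the prime $\m D[z]+(z)$, which lies over $\m$ and contains $z$. I would argue that the ideal $(1,z)$-invertibility data, namely a relation $1=x+yz$ with $x\in(B:(1,z))$ and $y\in(B:(1,z))z^{-1}\cdots$, produces an integral dependence making $\m D[z]$ fail to be prime of polynomial type. Concretely, invertibility of $(1,z)B$ gives $(1,z)(1,z)^{-1}=B$. Using Gilmer's lemma for integrally closed domains, $z$ or $z^{-1}$ is then a root of a polynomial over $D$ with a unit coefficient whose content is $D$. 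Reducing modulo $\m$ yields a nonzero polynomial killing the residue of $z$, or a relation that forces the residue of $z$ to be a unit or algebraic. Either outcome contradicts the fiber being the polynomial ring $(D/\m)[Z]$.

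If this route gets messy, I would instead invoke the paper's later Theorem~\ref{thm:specified-overring}, which shows that $(\bar u,\bar v)T$ is noninvertible for every integral overring $T$. Taking $T=B$ then gives the claim directly.
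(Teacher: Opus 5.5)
Your first two steps are exactly the paper's Step~1: Theorem~\ref{thm:uniform-periodic} for $A$, dividedness from Theorem~\ref{thm:TVN}, flatness from Lemma~\ref{lem:cut-flat}, then Theorem~\ref{thm:quotient} and Corollary~\ref{cor:periodic-divided}. The only issue is non-Pr\"uferness of $B$.

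\textbf{The gap in your first attempt.} The route you call your first attempt does not close.

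\emph{The case split.} The case ``$z\in B_N$ for every maximal $N$'' is fine. The complementary case, however, is where all the work lies, and your argument extracts nothing from it. An arbitrary maximal ideal $N$ with $z^{-1}\in NB_N$ does not interact with the fiber identity. The contradiction has to be produced at one specific maximal ideal, obtained as follows.
\begin{enumerate}
\item Lift $\mathfrak mD[z]\subsetneq\mathfrak mD[z]+zD[z]$ to a chain $Q_0\subsetneq Q_1$ in $B[z]$ by lying over and going up.
\item Use incomparability for $D\subseteq B$ to get $Q_0\cap B=Q_1\cap B=N$.
\item At that $N$, $z\in B_N$ would collapse $Q_0$ and $Q_1$, while $z^{-1}\in B_N$ would make $z$ a unit although $z\in Q_1$.
\end{enumerate}

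\emph{The appeal to Gilmer.} Citing ``Gilmer's lemma for integrally closed domains'' does not fill this. Applied to the Pr\"ufer domain $B$, invertibility of $(1,z)B$ only gives data with coefficients in $B$: for instance $c+dz=1$ with $c,cz,d,dz\in B$, or the unit-content polynomial $dX^3-dzX^2+cX-cz\in B[X]$. Reducing modulo $\mathfrak m$, however, requires coefficients in $D$. The descent from $B$ to $D$ is the entire difficulty, and you do not supply it; it is the content of the Gilmer--Hoffmann characterization of domains with Pr\"ufer integral closure. Once a polynomial over $D$ with a unit coefficient vanishing at $z$ exists, the contradiction is immediate. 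The fiber identity says precisely that no such polynomial exists, since the kernel of $D[X]\to D[z]/\mathfrak mD[z]$ is $\mathfrak mD[X]+\ker(D[X]\to D[z])$.

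\textbf{Your fallback works, by a different route.} The fallback is sound and not circular. The noninvertibility claim in Theorem~\ref{thm:specified-overring} is just Lemma~\ref{lem:polynomial-fiber-ideal} applied to \eqref{eq:polynomial-fiber}, and that uses only Theorem~\ref{thm:TVN}, so cite the lemma directly. Taking $T=B$, $(\bar u,\bar v)B$ is a nonzero finitely generated noninvertible ideal, so $B$ is not Pr\"ufer.

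The paper's Step~2 instead counts dimensions.
\begin{enumerate}
\item The chain $0\subsetneq\mathfrak mD[z]\subsetneq\mathfrak mD[z]+zD[z]$ gives $\dim D[z]\ge2$.
\item If $B$ were Pr\"ufer, it would be one-dimensional, and its overring $B[z]$ would have dimension at most one.
\item Integrality of $B[z]$ over $D[z]$ forces equal dimensions, a contradiction.
\end{enumerate}
That argument is short given standard dimension facts, but it uses $\dim D=1$ and yields only non-Pr\"uferness. Your route is dimension-free, since it works at the single maximal ideal under the lifted chain. It also gives the sharper conclusion that the specific ideal $(\bar u,\bar v)$ stays noninvertible in every integral overring, which the paper proves separately in Section~\ref{sec:obstruction}.
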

\begin{proof}
\step{Step 1: the Gorenstein property and its periodic refinement.}Each $A_n\cong\mathbb Q[[X,Y]]/(Y^n-X^{n+1})$ is a plane-curve
domain. Theorem~\ref{thm:uniform-periodic} makes $A$ coherent and
$G$-Pr\"ufer and supplies the uniform periodic complexes.
Theorem~\ref{thm:TVN} makes $P$ divided, and
Lemma~\ref{lem:cut-flat} makes it flat. Thus
Corollary~\ref{cor:periodic-divided} transfers the complexes to
$D$. Theorem~\ref{thm:quotient} and localization also give that
$D$ is coherent and FGV, hence $G$-Pr\"ufer. Its dimension and
locality are in Theorem~\ref{thm:TVN}.

\step{Step 2: failure of Pr\"ufer normalization.}The primes of the polynomial
fiber \eqref{eq:polynomial-fiber} give
\[
 0\subsetneq\m D[z]\subsetneq\m D[z]+zD[z].
\]
Thus $\dim D[z]\ge2$. If $B$ were Pr\"ufer, its integral extension
over $D$ would give $\dim B=1$. Every overring of a one-dimensional
Pr\"ufer domain has dimension at most one. But $B[z]$ is integral
over $D[z]$, so dimension preservation would give
$2\le\dim D[z]=\dim B[z]\le1$, a contradiction. These dimension
arguments are the ones in \cite[Lemma~3.2 and the proof of
Theorem~1.1]{TVN}.
\end{proof}

The ideal-theoretic proof of the $G$-Pr\"ufer property can also be
read directly from Proposition~\ref{prop:ultra-fgv},
Lemma~\ref{lem:uniform}, and Theorem~\ref{thm:quotient}, using the
uniform relation bound of \cite[Lemma~2.2]{TVN}. Theorem~\ref{thm:main}
additionally identifies a periodic complete resolution for every
prescribed finite generating row in the final domain.

\section{Normalization and the limits of Gorenstein flatness}
\label{sec:obstruction}

The polynomial fiber supplies an ideal-theoretic obstruction that
persists in every integral overring. Over the normalization, a two-chart
sequence converts this obstruction into nonvanishing Tor. We conclude
by separating the resulting example from familiar nonnormal curves and
by showing that two-periodicity is not automatic for Gorenstein
Pr\"ufer domains.

\subsection{A noninvertible ideal and the two-chart Tor test}

The next argument uses the polynomial-fiber identity itself and does
not impose a dimension bound.

\begin{lemma}\label{lem:polynomial-fiber-ideal}
Let $(R,\mathfrak m,k)$ be a local domain, let $0\ne a,b\in R$, and
put $z=b/a$. Suppose the natural map
$k[Z]\to R[z]/\mathfrak mR[z]$ is an isomorphism.
Then $(a,b)T$ is not invertible for any integral overring $T$ of $R$.
\end{lemma}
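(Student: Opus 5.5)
The plan is to argue by contradiction. Suppose $(a,b)T$ is invertible for some integral overring $T$ of $R$. The idea is to produce a prime of $T[z]$ lying over the prime $\m R[z]$. In the residue field at that prime, the image of $z$ must be transcendental over $k$ because of the polynomial fiber. At the same time, invertibility forces that image to be algebraic over $k$.

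\step{Step 1: the fiber prime and lying over.} The hypothesis $k[Z]\cong R[z]/\m R[z]$ says that $\m R[z]$ is a prime of $R[z]$, and that $R[z]/\m R[z]$ is a polynomial ring in the image of $z$. Since $T$ is integral over $R$, the ring $T[z]=T\cdot R[z]$ is integral over $R[z]$. By lying over, there is a prime $\mathfrak P$ of $T[z]$ with $\mathfrak P\cap R[z]=\m R[z]$. The map $k[Z]\to T[z]/\mathfrak P$ is then injective and integral.

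Put $N=\mathfrak P\cap T$. Then $N\cap R=\m$. Because $T$ is integral over the local ring $R$, a prime of $T$ contracting to the maximal ideal $\m$ is maximal. So $N$ is maximal, and $L=T/N$ is a field that is algebraic over $k$. Note that $L$ embeds in $T[z]/\mathfrak P$.

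\step{Step 2: local principality.} An invertible ideal is locally principal at maximal ideals. So $(a,b)T_N$ is generated by one element. Over a local ring, if an ideal generated by $a$ and $b$ is principal, it is generated by $a$ or by $b$, by a standard Nakayama argument. Hence $z=b/a\in T_N$ or $z^{-1}\in T_N$.

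\step{Step 3: the contradiction.} Suppose first that $z\in T_N$. Write $z=t/s$ with $t\in T$ and $s\in T\setminus N$. Then $sz=t$ holds in $T[z]$. Reducing modulo $\mathfrak P$, the image $\bar s$ is a nonzero element of $L$, since $s\notin N=\mathfrak P\cap T$. Hence $\bar z=\bar t\,\bar s^{-1}\in L$, so $\bar z$ is algebraic over $k$.

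Now suppose instead that $z^{-1}\in T_N$. The same argument gives $\bar s\,\bar z^{-1}$-type relations: writing $z^{-1}=t/s$, we get $s=tz$ in $T[z]$. Hence $\bar t\,\bar z=\bar s\neq0$, so $\bar t\neq0$ and $\bar z=\bar s\,\bar t^{-1}\in L$, again algebraic over $k$.

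In either case $\bar z$ is algebraic over $k$. This contradicts the injectivity of $k[Z]\to T[z]/\mathfrak P$, which sends $Z$ to $\bar z$ and so requires $\bar z$ to be transcendental over $k$.

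The main point needing care is Step 1. The prime must be taken over $\m R[z]$ itself, rather than over a maximal ideal of $R[z]$, so that injectivity of $k[Z]$ is retained. The contraction to $T$ must also be shown to be maximal; this uses integrality over the local ring $R$. The remaining steps are routine. No dimension hypothesis enters, consistent with the remark preceding the lemma.
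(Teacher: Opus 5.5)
Your argument is correct, but it takes a different route from the paper's.

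The paper works with a chain of two primes:
\begin{enumerate}
\item It lifts $\mathfrak q_0=\mathfrak mR[z]\subsetneq\mathfrak q_1=\mathfrak mR[z]+zR[z]$ to a chain $\mathfrak Q_0\subsetneq\mathfrak Q_1$ in $T[z]$ by lying-over and going-up.
\item It uses incomparability for $R\subseteq T$ to see that both primes contract to one prime $\mathfrak p$ of $T$.
\item It then treats the two cases asymmetrically. If $z^{-1}\in T_{\mathfrak p}$, then $z$ is a unit in $T_{\mathfrak p}[z]$, yet the prime $\mathfrak Q_1\ni z$ survives there. If $z\in T_{\mathfrak p}$, then $(T\setminus\mathfrak p)^{-1}T[z]=T_{\mathfrak p}$, and the two surviving primes both contract to $\mathfrak p$, which forces $\mathfrak Q_0=\mathfrak Q_1$.
\end{enumerate}

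You instead use a single prime $\mathfrak P$ over $\mathfrak mR[z]$ and compare residue fields. Either form of local principality at $N=\mathfrak P\cap T$ places $\bar z$ in $L=T/N$, which is algebraic over $k$. But injectivity of $k[Z]\to T[z]/\mathfrak P$ makes $\bar z$ transcendental over $k$.

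What your version buys is economy and symmetry. It needs only lying-over for $R[z]\subseteq T[z]$, plus the fact that a prime of an integral extension lying over a maximal ideal is maximal with algebraic residue field. It uses no going-up, incomparability, or localization correspondence, and the cases $z\in T_N$ and $z^{-1}\in T_N$ are handled identically. The paper's version is phrased through a chain of primes in the fiber, which parallels the dimension-counting argument in Step~2 of the proof of Theorem~\ref{thm:main}.

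Two cosmetic points. First, the phrase about ``$\bar s\,\bar z^{-1}$-type relations'' should be replaced by the computation $s=tz$ that you then carry out. That computation is correct: $\bar t\bar z=\bar s\ne0$ gives $t\notin N$, so $\bar t$ is invertible in $L$. Second, the integrality of $k[Z]\to T[z]/\mathfrak P$ asserted in Step~1 is never used.
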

\begin{proof}
\step{Step 1: comparable primes over one prime of the integral overring.}The ideals
$\mathfrak q_0=\mathfrak mR[z]\subsetneq
\mathfrak q_1=\mathfrak mR[z]+zR[z]$
are prime and both contract to $\mathfrak m$ in $R$.
The extension $R[z]\subseteq T[z]$ is integral. Lying-over and
going-up give primes $\mathfrak Q_0\subsetneq\mathfrak Q_1$ of
$T[z]$ above them. Their contractions to $T$ coincide, say with
$\mathfrak p$, by incomparability for the integral extension
$R\subseteq T$; see \cite[Section~10.36]{Stacks}.

\step{Step 2: invertibility contradicts the prime chain.}Suppose $(a,b)T$ is invertible. Over the local domain $T_{\mathfrak p}$
it is generated by either $a$ or $b$. If $b$ generates it, then
$z^{-1}=a/b\in T_{\mathfrak p}$, so $z$ is a unit in
$T_{\mathfrak p}[z]$. This contradicts the survival of
$\mathfrak Q_1$, which contains $z$, after localizing at
$T\setminus\mathfrak p$.
If $a$ generates the ideal, then $z\in T_{\mathfrak p}$, so
$(T\setminus\mathfrak p)^{-1}T[z]=T_{\mathfrak p}$.
Both $\mathfrak Q_0$ and $\mathfrak Q_1$ survive this localization
and contract to the maximal ideal $\mathfrak pT_{\mathfrak p}$.
They must therefore become the same prime, contradicting the
prime correspondence for localization. This excludes invertibility.
\end{proof}

For an integrally closed domain, noninvertibility is detected by a
first Tor group for a two-chart overring.

\begin{lemma}\label{lem:normal-tor-obstruction}
Let $B$ be an integrally closed domain, let $0\ne a,b\in B$, and
suppose $I=(a,b)B$ is not invertible. Put $z=b/a$ and
$S=B[z,z^{-1}]$. For every injective cogenerator $E_0$ of $B$-modules (that is,
$\Hom_B(N,E_0)\ne0$ for every nonzero $B$-module $N$),
\[
 \Tor_1^B(E_0,S)\ne0.
\]
In particular, $S$ is not Gorenstein flat over $B$.
\end{lemma}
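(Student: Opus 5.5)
The plan is to use the two-chart (Mayer--Vietoris) sequence of $B$-modules
\[
 0\longrightarrow B\xrightarrow{\ \delta\ } B[z]\oplus B[z^{-1}]\xrightarrow{\ \sigma\ } S\longrightarrow 0,
 \qquad \delta(x)=(x,x),\quad \sigma(f,g)=f-g,
\]
and to show that the connecting map in its long Tor sequence after tensoring with $E_0$ is nonzero.

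\step{Exactness of the sequence.} Surjectivity of $\sigma$ is clear, since $S=B[z]+B[z^{-1}]$. Exactness in the middle is the identity $B[z]\cap B[z^{-1}]=B$. This is the classical fact that for an integrally closed domain $B$ and $0\ne z\in\Frac B$ one has $B[z]\cap B[z^{-1}]=B$. Its proof clears powers of $z$ in the two expressions of a common element and produces integrality equations; I will cite it or reprove it in a few lines. This is the only place where integral closedness enters the sequence.

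\step{Reduction to a kernel.} Tensoring with $E_0$ gives the exact segment
\[
 \Tor_1^B(E_0,S)\xrightarrow{\ \partial\ } E_0\xrightarrow{\ \delta\otimes 1\ }(E_0\otimes_B B[z])\oplus(E_0\otimes_B B[z^{-1}]).
\]
So it suffices to find $0\ne x\in E_0$ with $x\otimes1=0$ in both chart tensor products, since then $x=\partial(\xi)$ for some $\xi\ne0$. I will write $B[z]=\bigcup_k a^{-k}I^k$ and $B[z^{-1}]=\bigcup_k b^{-k}I^k$. Then $E_0\otimes_B B[z]$ is the filtered colimit of the modules $E_0\otimes_B I^k$, with transition maps given by dividing by $a$, and similarly for the other chart with $b$. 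Consequently $x\otimes1$ vanishes in the first chart exactly when, for some $k$, the element $x\otimes a^k$ vanishes in $E_0\otimes_B I^k$; the same holds with $b$ in the second chart.

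\step{Choice of $x$ (the main obstacle).} Noninvertibility says $II^{-1}\subseteq\m$ for some maximal ideal $\m$. After localizing there, neither $z$ nor $z^{-1}$ lies in $B_\m$. The plan is to choose $x$ by the cogenerator property: take a nonzero map $B/J\to E_0$ for a suitable ideal $J$ built from $I$, $I^{-1}$ and $\m$, and let $x$ be the image of $1$. The natural first candidate is $J=II^{-1}$, possibly intersected with a conductor such as $(aB:_B b)\cap(bB:_B a)$. I then want to check $x\otimes a^k=0$ in $E_0\otimes_B I^k$ by writing $a^k$ inside $JI^k$, using $I\cdot I^{-1}\subseteq J$ and the relations $b\cdot(a/b)=a$.

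I expect this step to be the hardest. The naive choice $J=\m$ does not work in general: for $B=k[x,y]_{(x,y)}$ and $z=x/y$, the ideal $\m B[z]$ is proper, so the residue field does not die in either chart. Hence the argument must genuinely use the tensor structure of $E_0\otimes I^k$, not just surjectivity of $\m$ in the charts.

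If the direct construction stalls, I will dualize instead. Since $E_0$ is an injective cogenerator, $\Tor_1^B(E_0,S)=0$ is equivalent to
\[
 \Ext^1_B\bigl(E_0,\Hom_B(S,E_0)\bigr)=0.
\]
I would then aim to show that vanishing would force the chart sequence to split locally at $\m$. A splitting of that kind makes $I_\m$ principal, with generator $a$ or $b$, and this is ruled out by $z,z^{-1}\notin B_\m$.

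\step{The final assertion.} If $S$ were Gorenstein flat over $B$, it would be a cycle of a complex of flat modules that stays exact after tensoring with the injective module $E_0$. This forces $\Tor_i^B(E_0,S)=0$ for $i\ge1$, contradicting $\Tor_1^B(E_0,S)\ne0$.
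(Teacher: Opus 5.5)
Your framework matches the paper's. The two-chart sequence is exact because $B[z]\cap B[z^{-1}]=B$ for normal $B$. It then suffices to find $0\ne x\in E_0$ whose image vanishes in both $E_0\otimes_B B[z]$ and $E_0\otimes_B B[z^{-1}]$, and your colimit description through $E_0\otimes_B I^k$ is correct.

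\textbf{The gap.} The gap is the step you call the main obstacle, and the mechanism you propose there cannot work for any proper ideal $J$. Suppose $Jx=0$ and you certify $x\otimes a^k=0$ by writing $a^k\in JI^k$. Dividing by $a^k$ gives $1\in JB[z]$. Certifying the other chart by $b^k\in JI^k$ gives $1\in JB[z^{-1}]$. Both cannot hold when $J$ is proper. Take a maximal ideal $\mathfrak n\supseteq J$; some valuation overring $V$ of $B$ has maximal ideal $\mathfrak m_V$ contracting to $\mathfrak n$. Then $V$ contains $z$ or $z^{-1}$, so $\mathfrak m_V$ contracts to a prime of $B[z]$ or of $B[z^{-1}]$ that contains $J$.

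Your own example already shows this. For $B=k[x,y]_{(x,y)}$ and $I=(x,y)$ one has $I^{-1}=B$, so $II^{-1}$ is exactly the maximal ideal you rejected. Intersecting with conductors only makes $J$ smaller.

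The underlying problem is that any argument seeing only the cyclic submodule $Bx\cong B/J$ reduces to this, since $(B/J)\otimes_B B[z]=B[z]/JB[z]$. Tensor product is not left exact, so the vanishing must be witnessed by elements of $E_0$ outside $Bx$, and nothing in your plan produces them. The $\operatorname{Ext}$ fallback does not close the gap either. You give no reason why vanishing of $\operatorname{Ext}^1_B(E_0,\Hom_B(S,E_0))$ would split the chart sequence at $\mathfrak m$.

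\textbf{The missing idea.} You need the injectivity of $E_0$, not only the cogenerator property. Take $0\ne e\in E_0$ with $(II^{-1})e=0$, which is your candidate $J=II^{-1}$. Every relation $ra+sb=0$ has the form $(r,s)=(bc,-ac)$ with $c=r/b\in I^{-1}$, and both coordinates kill $e$. Hence $a\mapsto e,\ b\mapsto0$ and $a\mapsto0,\ b\mapsto e$ are well-defined homomorphisms $I\to E_0$. Extending them to $B$ by injectivity gives $e_1,e_2\in E_0$ with $ae_1=e$, $be_1=0$, $ae_2=0$, $be_2=e$. Then already at $k=1$ in your colimit description,
\[
 e\otimes a=be_2\otimes a=ae_2\otimes b=0,
 \qquad
 e\otimes b=ae_1\otimes b=be_1\otimes a=0
\]
in $E_0\otimes_B I$. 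So $e$ dies in both charts, and $\Tor_1^B(E_0,S)\ne0$. This is exactly the paper's argument, written there as $e\otimes1=be_2\otimes1=ae_2\otimes z=0$ together with its mirror image.
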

\begin{proof}
Normality gives $B[z]\cap B[z^{-1}]=B$, and their sum is $S$.
Thus \cite[Tag~052J]{Stacks} gives
\[
 0\longrightarrow B\xrightarrow{c\mapsto(c,-c)}
 B[z]\oplus B[z^{-1}]\longrightarrow S\longrightarrow0.
\]
Suppose $\Tor_1^B(E_0,S)=0$. Then the induced map from $E_0$ to
$(E_0\otimes_B B[z])\oplus(E_0\otimes_B B[z^{-1}])$ is injective.
Since $II^{-1}\ne B$, the cogenerator property supplies
$0\ne e\in E_0$ with $(II^{-1})e=0$.

Every relation $ra+sb=0$ has the form
$(r,s)=(bc,-ac)$ for $c\in I^{-1}$. Both coordinates annihilate
$e$, so the assignments
$a\mapsto e,b\mapsto0$ and $a\mapsto0,b\mapsto e$
define maps $I\to E_0$. Extend them to $B$ by injectivity.
There are $e_1,e_2\in E_0$ with
\[
 ae_1=e,\quad be_1=0,\qquad ae_2=0,\quad be_2=e.
\]
Since $b=az$, the image of $e$ in $E_0\otimes_B B[z]$ is
$be_2\otimes1=ae_2\otimes z=0$.
Using $a=bz^{-1}$ and $e_1$ proves that its image in the other
summand is also zero, a contradiction. Positive Tor against
injectives vanishes for a Gorenstein flat module, proving the last
assertion.
\end{proof}

\subsection{The specified overring and the homological invariants}

The preceding lemmas identify both the ideal and the overring in
Theorem~A, while keeping the two base rings distinct.

\begin{theorem}\label{thm:specified-overring}
Let $\mathcal U$ be a nonprincipal ultrafilter on $\{2,3,\ldots\}$,
let $D=(A/P)_{Q/P}$ be the domain in
\eqref{eq:factors}--\eqref{eq:D}, and put $B=\overline D$.
Use $u=[t^{n^2}]$, $v=[t^{n^2+\lfloor(\sqrt2-1)n\rfloor}]$,
and $z=\bar v/\bar u$ as in Section~\ref{sec:construction}. Then
$(\bar u,\bar v)T$ is not invertible in any integral overring $T$
of $D$. For the overring $S=B[z,z^{-1}]$, one has
\[
 \Gfd_D B=\Gfd_D S=0,\qquad \fd_D B=\infty,
 \qquad \Tor_1^B(E_0,S)\ne0
\]
for every injective cogenerator $E_0$ of $B$-modules. Hence $S$ is
Gorenstein flat over $D$ but not over $B$; $B$ is not a $G$-Pr\"ufer domain.
\end{theorem}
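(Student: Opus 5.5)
The plan is to assemble the statement from the lemmas of the preceding subsection together with Theorems~\ref{thm:main} and~\ref{thm:TVN}.

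\emph{Noninvertibility.} Theorem~\ref{thm:TVN} says that $D$ is local with maximal ideal $\m$, that $0\ne\bar u,\bar v\in\m$, and that the polynomial fiber \eqref{eq:polynomial-fiber} holds for $z=\bar v/\bar u$. These are exactly the hypotheses of Lemma~\ref{lem:polynomial-fiber-ideal} with $R=D$, $a=\bar u$, $b=\bar v$. That lemma shows $(\bar u,\bar v)T$ is not invertible for every integral overring $T$ of $D$.

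\emph{The Tor obstruction.} Take $T=B$ above. Then $(\bar u,\bar v)B$ is a noninvertible ideal of the integrally closed domain $B$. Lemma~\ref{lem:normal-tor-obstruction} then gives $\Tor_1^B(E_0,S)\ne0$ for every injective cogenerator $E_0$, so $S$ is not Gorenstein flat over $B$. For the last claim, suppose $B$ were $G$-Pr\"ufer. By Theorem~\ref{thm:criterion}(iii), the torsion-free $B$-module $S$ would be Gorenstein flat over $B$, a contradiction. Alternatively, an integrally closed $G$-Pr\"ufer domain is Pr\"ufer, which contradicts Theorem~\ref{thm:main}.

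\emph{Gorenstein flat dimensions over $D$.} Theorem~\ref{thm:main} shows that $D$ is $G$-Pr\"ufer. By Theorem~\ref{thm:criterion}(iii), every torsion-free $D$-module is Gorenstein flat, and $B$ and $S$ are nonzero overrings, hence torsion-free. So $\Gfd_DB=\Gfd_DS=0$.

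\emph{The main obstacle: $\fd_DB=\infty$.} I would argue in two stages.
\begin{enumerate}
\item First, $B$ is not flat over $D$. If it were, then $B$ would be a flat overring, and for finitely generated $I$ flatness gives $(D:I)B=(B:IB)$. Better, use the standard fact (Richman) that a flat overring of a local domain is a localization; since $D$ is local, this forces $B=D$. Then $D$ would be an integrally closed $G$-Pr\"ufer domain, hence Pr\"ufer, contradicting Theorem~\ref{thm:main}.
\item Second, $\fd_DB<\infty$ forces flatness. Over a $G$-Pr\"ufer ring, Gorenstein flat modules of finite flat dimension should be flat. The route I would try first is the coherent version of the Holm/Christensen--Frankild--Holm result that $\Gfd$ equals $\fd$ when the latter is finite. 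Because $\Gfd_DB=0$, this gives $\fd_DB=0$, contradicting the first stage.
\end{enumerate}
The delicate points are checking that the finite-$\fd$ comparison applies over the coherent, non-Noetherian ring $D$, and justifying the flat-overring-of-a-local-domain step. If the latter is problematic, I would replace it by a direct argument. A flat integral overring is both integral and flat over $D$, hence faithfully flat over the local ring $D$ since a prime lies over $\m$. Faithfully flat overrings satisfy $B\cap K=D$, i.e.\ $B=D$.
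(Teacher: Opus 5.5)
Your proposal is correct and follows the paper's proof essentially step for step. The steps match the paper's: noninvertibility from Lemma~\ref{lem:polynomial-fiber-ideal}, the Tor obstruction from Lemma~\ref{lem:normal-tor-obstruction} with $T=B$, and Gorenstein flatness over $D$ from Theorem~\ref{thm:criterion}. The paper also gets $\fd_DB=\infty$ your way: a flat integral overring of the local domain $D$ is faithfully flat by lying-over, hence equals $D$, and the coherent comparison $\Gfd=\fd$ for modules of finite flat dimension finishes the argument (the paper cites Bennis, Theorem~2.2, for this).

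Two small repairs are needed. The faithfully flat fact you want is $aB\cap D=aD$, so that $b/a\in B$ forces $b/a\in D$; your ``$B\cap K=D$'' is meaningless as written, since $B\cap K=B$. Also, your Richman route yields $B=D$ only after you use integrality, because, for example, $K$ itself is a flat overring of the local domain $D$.
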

\begin{proof}
Apply Lemma~\ref{lem:polynomial-fiber-ideal} to
\eqref{eq:polynomial-fiber}, with $a=\bar u,b=\bar v$.
The nonvanishing Tor assertion follows from
Lemma~\ref{lem:normal-tor-obstruction} over $B$.
Both $B$ and $S$ are torsion-free $D$-modules, so their Gorenstein
flatness follows from Theorem~\ref{thm:criterion}.

If $B$ were $G$-Pr\"ufer, its torsion-free module $S$ would be
Gorenstein flat, contrary to the nonvanishing just proved. Thus $B$
is not $G$-Pr\"ufer. In particular $B\ne D$, since $D$ is $G$-Pr\"ufer.
This proper integral overring is not flat over $D$. Otherwise it
would be faithfully flat, by lying-over. For $a,b\in D$, $a\ne0$,
and $b/a\in B$, faithful flatness would give
$b\in aB\cap D=aD$, forcing $b/a\in D$. This would make $B=D$,
contrary to its non-Pr\"ufer property. Since finite flat dimension
agrees with Gorenstein flat dimension \cite[Theorem~2.2]{Bennis},
$\Gfd_D B=0$ and nonflatness imply $\fd_D B=\infty$.
\end{proof}

The same example has exact global dimension invariants over its
base domain.

\begin{corollary}\label{cor:dimensions}
Let $D$ be the domain defined in \eqref{eq:factors}--\eqref{eq:D}.
Then $D$ is not Noetherian, and
\[
 \wGgldim(D)=1,\qquad \wgd(D)=\infty.
\]
\end{corollary}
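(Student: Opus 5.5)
We need to prove three assertions: $D$ is not Noetherian, $\wGgldim(D)=1$, and $\wgd(D)=\infty$.

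\emph{Non-Noetherian.} We argue by contradiction. If $D$ were Noetherian, it would be a one-dimensional Noetherian local domain, by Theorem~\ref{thm:main}. The Krull--Akizuki theorem would then make $B=\overline D$ a Dedekind domain, hence Pr\"ufer. This contradicts Theorem~\ref{thm:main}.

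\emph{Weak global dimension.} For $\wgd(D)=\infty$ we use Theorem~\ref{thm:specified-overring}: it gives a $D$-module $B$ with $\fd_D B=\infty$, so $\wgd(D)\ge\fd_DB=\infty$. An alternative argument is that a coherent domain of finite weak global dimension with $\wGgldim\le1$ would have $\wgd\le1$, making $D$ Pr\"ufer and $B=D$. The direct route through $\fd_DB$ is shorter, and we will use it.

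\emph{Gorenstein weak global dimension.} For $\wGgldim(D)=1$, the upper bound is part of the definition of $G$-Pr\"ufer, which holds for $D$ by Theorem~\ref{thm:main}. For the lower bound it suffices to exhibit one module that is not Gorenstein flat. We take $D/\m$, or more generally any $D/aD$ with $0\ne a\in\m$. Suppose such a module were Gorenstein flat. Gorenstein flat modules embed in flat modules: they are cycles of complexes of flat modules, hence submodules of flat modules. Over a domain, flat modules are torsion-free. But $D/aD$ is nonzero torsion, so it cannot embed in a torsion-free module. Hence $\Gfd_D(D/aD)\ge1$, and $\wGgldim(D)=1$.

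The only point needing care is this embedding step. It follows directly from Definition~\ref{def:normalization-gf}, since each cycle is a submodule of the next flat term. Everything else is a direct citation of the earlier results.
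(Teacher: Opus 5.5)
Your proof is correct and follows the paper's argument essentially step for step. The paper also uses Krull--Akizuki against the non-Pr\"ufer normalization, the nonzero torsion module $D/aD$ (which cannot embed in a flat, hence torsion-free, module) for $\wGgldim(D)\ge1$, and $\fd_D B=\infty$ from Theorem~\ref{thm:specified-overring} for $\wgd(D)=\infty$.
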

\begin{proof}
If $D$ were Noetherian, the Krull--Akizuki theorem would make its
one-dimensional normalization Dedekind, contrary to
Theorem~\ref{thm:main}; see \cite[Tag~09IG]{Stacks}.
The $G$-Pr\"ufer property gives $\wGgldim(D)\le1$.
For a nonzero nonunit $a\in D$, the nonzero torsion module $D/aD$
cannot be Gorenstein flat, since a Gorenstein flat module over a
domain embeds in a flat, hence torsion-free, module. Thus the weak
Gorenstein global dimension is exactly one. The module $B$ has
infinite flat dimension by Theorem~\ref{thm:specified-overring},
so $\wgd(D)=\infty$.
\end{proof}

\subsection{Examples delimiting normalization and periodicity}

A nonnormal Gorenstein Pr\"ufer domain can have Pr\"ufer normalization;
therefore nonnormality alone does not explain Theorem~\ref{thm:main}.

\begin{example}\label{ex:cusp}
For a field $k$, let $R=k[[t^2,t^3]]$. Then $R$ is a non-Pr\"ufer
Gorenstein Pr\"ufer domain, but its normalization is the DVR $k[[t]]$.
\end{example}
\begin{proof}
The presentation $R\cong k[[u,v]]/(v^2-u^3)$ is a one-dimensional
Noetherian hypersurface, so $R$ is Gorenstein and hence $G$-Pr\"ufer
\cite{CFH,HLZ}. Its quotient field contains $t=t^3/t^2$, which is
integral over $R$ but not in $R$. Hence $R$ is not normal and is not
Pr\"ufer. The ring $k[[t]]=R+Rt$ is integral over $R$, is integrally
closed, and has the same quotient field. It is therefore the
normalization, and is a DVR.
\end{proof}

The periodicity in Theorem~\ref{thm:main} is a stronger feature than
Gorenstein Pr\"uferness. A complete-intersection curve separates them.

\begin{example}\label{ex:nonperiodic-ideal}
Let $k$ be a field of characteristic zero, set
$R=k[[t^4,t^6,t^9]]$, and put $\mathfrak n=(t^4,t^6,t^9)R$.
Then $R$ is Gorenstein Pr\"ufer, but $\mathfrak n$ has no eventually
periodic resolution by finitely generated free $R$-modules.
More precisely, $\beta_i^R(\mathfrak n)=2i+3$ for every $i\ge0$.
\end{example}
\begin{proof}
The assignments $u\mapsto t^4$, $v\mapsto t^6$, and $w\mapsto t^9$
give
\[
 R\cong k[[u,v,w]]/(v^2-u^3,\ w^2-v^3).
\]
Indeed, reduction by the two relations writes every class as
$a(u)+b(u)v+c(u)w+d(u)vw$. The four exponents $0,6,9,15$ have
 distinct residues modulo four, so their substituted series are
independent over $k[[t^4]]$. The first relation defines a domain,
and the second is a nonzero element in that quotient; the displayed
relations are a regular sequence. Thus $R$ is a one-dimensional
Noetherian complete intersection and is $G$-Pr\"ufer \cite{CFH,HLZ}.

Let $\beta_i^R(M)=\dim_k\Tor_i^R(k,M)$ be the $i$th Betti number of a
finitely generated module. The complete-intersection resolution of the
residue field \cite{Tate} gives its Poincar\'e series
\[
 P_k^R(s)=\sum_{i\ge0}\beta_i^R(k)s^i
 =\frac{(1+s)^3}{(1-s^2)^2}
 =\frac{1+s}{(1-s)^2}.
\]
Consequently $\beta_i^R(k)=2i+1$. Since $\mathfrak n$ is the first
syzygy of $k$ in a minimal free resolution,
$\beta_i^R(\mathfrak n)=\beta_{i+1}^R(k)=2i+3$.
An eventually periodic resolution with finitely generated free terms
has bounded ranks and hence bounded Betti numbers, contradicting this
formula. The ideal still has a complete resolution; the conclusion
excludes periodic resolutions of finite rank, not complete resolutions
in general.
\end{proof}

The constructed domain strengthens the known coherent normalization
counterexample in two directions. Divisoriality survives the
ultraproduct and the divided-prime quotient, and flatness of the prime
also preserves the uniform two-periodic complexes. The obstruction
over the normalization is independent of these transfer proofs:
a polynomial fiber forces an extended ideal to remain noninvertible,
and the two-chart sequence detects this by first Tor. No description
of every element of the normalization is required. Finally, the
nonvanishing result asserts $\Gfd_B S>0$ and does not determine
whether this dimension is finite or infinite.

\bigskip

\paragraph*{\bf Competing interests:} The author declares that there are no competing interests.

\medskip

\paragraph*{\bf Availability of data and material:}
No datasets were generated or analysed during the current study.

\medskip

\paragraph*{\bf Declaration of generative AI and AI-assisted technologies in the manuscript preparation process:}  
During the preparation of this manuscript, the author used ChatGPT (OpenAI, GPT-6) for language editing, LaTeX formatting, literature exploration, and assistance in examining mathematical arguments and proofs. All AI-assisted content was independently verified by the author, who takes full responsibility for the accuracy, originality, and integrity of the manuscript.

\end{document}